\DeclareMathOperator{\ex}{ex}
\newcommand{\hm}[1]{\leavevmode{\marginpar{\tiny%
$\hbox to 0mm{\hspace*{-0.5mm}$\leftarrow$\hss}%
\vcenter{\vrule depth 0.1mm height 0.1mm width \the\marginparwidth}%
\hbox to 0mm{\hss$\rightarrow$\hspace*{-0.5mm}}$\\\relax\raggedright #1}}}
\newtheorem{theorem}{Theorem}[section]
\newtheorem{lemma}[theorem]{Lemma}
\theoremstyle{definition}
\newcommand\eps{\varepsilon}
\def\({\left(}
\def\){\right)}
\newcommand{\forb}{\mathrm{Forb}}
\DeclareMathOperator{\Fano}{Fano}
\begin{document}
\overfullrule=5pt

\newcommand{\eee}[1]{\textcolor{Red}{#1}}
\newcommand{\cca}[1]{\textcolor{Magenta}{#1}}
\newcommand{\ddd}[1]{\textcolor{Brown}{#1}}
\newcommand{\ccb}[1]{\textcolor{Green}{#1}}
\newcommand{\bbb}[1]{\textcolor{Blue}{#1}}
\newcommand{\ccc}[1]{\textcolor{Cyan}{#1}}
\newcommand{\ccd}[1]{\textcolor{Orange}{#1}}
\newcommand{\ccf}[1]{\textcolor{BrickRed}{#1}}
\newcommand{\ccg}[1]{\textcolor{NavyBlue}{#1}}
\newcommand{\yyy}[1]{\textcolor{ForestGreen}{#1}}
\newcommand\dju{\mathbin{\dot{\cup}}}

\title{A note on a stability result for the Fano plane}

\author[C. Hoppen]{Carlos Hoppen}
\address{Instituto de Matem\'atica e Estat\'{i}stica, UFRGS -- Avenida Bento Gon\c{c}alves, 9500, 91501--970 Porto Alegre, RS, Brazil}
\email{choppen@ufrgs.br}

\author[H. Lefmann]{Hanno Lefmann}
\address{Fakult\"at f\"ur Informatik, Technische Universit\"at Chemnitz,
Stra\ss{}e der Nationen 62, D-09107 Chemnitz, Germany}
\email{lefmann@informatik.tu-chemnitz.de}

\author[K. Odermann]{Knut Odermann}
\address{Fakult\"at f\"ur Informatik, Technische Universit\"at Chemnitz,
Stra\ss{}e der Nationen 62, D-09107 Chemnitz, Germany}
\email{knut.odermann@Informatik.TU-Chemnitz.de}

\thanks{This work has been produced as part of a binational program funded by Coordena\c{c}\~{a}o de Aperfei\c{c}oamento de Pessoal de N\'{i}vel Superior (CAPES) and DAAD via PROBRAL (CAPES Proc.~88881.143993/2017-01 and DAAD~57391132 and~57518130).}

\begin{abstract}
In this note, we adapt the proof of the (Tur\'{a}n) Stability Theorem for the Fano plane~\cite[Theorem~1.2]{keevash2005} to find an explicit dependency between the para\-meters $\eps$ and $\delta$. This is useful in the solution of a multicolored version for hypergraphs of an extremal problem about edge-colorings, known as the Erd\H{o}s-Rothschild problem, which may be considered for the Fano plane.
 \end{abstract}

\maketitle

\section{Introduction}

This note deals with stability in uniform hypergraphs. As usual, for an integer $r \geq 2$, an $r$-uniform hypergraph $H=(V,E)$  is a pair consisting of a \emph{vertex set} $V$ and of a set $E \subseteq \binom{V}{r}$ of \emph{hyperedges}, where $\binom{V}{r}$ is the set of all subsets of $V$ with cardinality $r$. For a fixed $r$-uniform hypergraph $F$, we say that a hypergraph $H=(V,E)$ is \emph{$F$-free} if it does not contain a copy of $F$ as a subhypergraph. Let $\forb_{F}(n)$ denote the family of all labeled $F$-free $r$-uniform hypergraphs on $n$ vertices.

Given an $r$-uniform hypergraph $F$, the \emph{hypergraph Tur\'{a}n problem} for $F$ consists of determining the \emph{Tur\'{a}n number} $\ex(n,F)=\max \{ |E(H)| \colon H \in \forb_{F}(n) \}$ and of characterizing those hypergraphs $H \in \forb_{F}(n)$ such that $|E(H)|=\ex(n,F)$, which are said to be \emph{$F$-extremal hypergraphs}.

It is often the case that the extremal hypergraph is unique up to isomorphism, and that any $F$-free configuration $H=(V,E)$ such that $|E|$ is close to the maximum size must be structurally similar to the extremal configuration for $F$. This is known as \emph{stability}, a concept that was observed in the graph-theoretical setting by Erd\H{o}s and Simonovits~\cite{simonovits} and that may be naturally extended to the hypergraph Tur\'{a}n problem. We should also mention that, compared with the graph case, much less is known about the Tur\'{a}n problem for hypergraphs. For more information about this problem and stability results for hypergraphs, we refer the reader to surveys by Keevash~\cite{Keevash2011} and by Mubayi and Verstra\"{e}te~\cite{MV16}.

The \emph{Fano plane}, hereafter denoted by $\Fano$, is the single linear 3-uniform hypergraph on seven vertices and seven hyperedges. (A hypergraph $H=(V,E)$ is \emph{linear} if $|e \cap f| \leq 1$ for all hyperedges $e,f \in E$, $e \neq f$.) It is the projective plane over the field with two elements. S\'{o}s~\cite{sos76} conjectured that the unique extremal hypergraph for the Fano plane is the balanced, complete, bipartite $3$-uniform hypergraph $B_n$ on $n$ vertices. This conjecture was verified for sufficiently large $n$ by Keevash and Sudakov~\cite{keevash2005} and, independently, by F\"uredi and Simonovits~\cite{FSstability}. An earlier paper of De Caen and F\"{u}redi~\cite{decaen_fueredi00} already established that 
$$\lim_{n \rightarrow \infty} \frac{\ex(n,\Fano)}{\binom{n}{3}}=\frac{3}{4}.$$
Recently, Bellmann and Reiher~\cite{BRstability} proved that the conjecture holds for every $n \geq 8$,  which is best possible. This means that, for $n \geq 8$,
 $$
 \ex(n,\Fano) = \binom{ \lfloor \frac{n}{2}  \rfloor}{2} \cdot \left\lceil \frac{n}{2}  \right\rceil +  \binom{ \lceil \frac{n}{2}  \rceil}{2}  \cdot \left\lfloor \frac{n}{2}  \right\rfloor.
 $$
 
In the papers of Keevash and Sudakov and of F\"uredi and Simonovits, the extremality of $B_n$ was obtained using stability. The precise result proved in~\cite[Theorem 1.2]{keevash2005} is the following. For a hypergraph $H= (V,E)$ and a subset $A \subseteq V$, we write $e_H(A)$ for the number of hyperedges $e$ of $H$ such that $e \subseteq A$. 
 \begin{theorem}
For all $\eps>0$, there are $\delta > 0$ and $n_0$ with the following property. If $H = (V,E)$ is a $\Fano$-free $3$-uniform hypergraph on $n \geq n_0$ vertices with at least $(1-\delta)\frac{3}{4} \binom{n}{3}$ hyperedges, then there is a partition $V(H) = A \cup B$ with the property that $e_H(A) + e_H(B) < \eps n^3$.
\end{theorem}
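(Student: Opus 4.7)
The plan is to adapt the stability proof of Keevash--Sudakov~\cite{keevash2005}, making every estimate explicit so that $\delta$ becomes a polynomial function of $\eps$. The argument reduces stability for $\Fano$ to a stability problem for triangle-free graphs via an auxiliary codegree graph.

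I would begin by cleaning $H$: iteratively remove any vertex whose degree is below $(3/8 - \eta) n^2$, for some $\eta$ polynomial in $\delta$. Because $H$ has $(1-\delta)\frac{3}{4}\binom{n}{3}$ hyperedges, the iteration terminates after removing at most $O(\sqrt{\delta})\cdot n$ vertices, leaving a subhypergraph $H'$ with both high minimum degree and edge density still close to $\frac{3}{4}\binom{n'}{3}$. Next, fix a threshold $\tau = 2n/3$ and consider the auxiliary graph $G$ on $V(H')$ whose edges are the pairs $\{u,v\}$ with codegree $|\{w : \{u,v,w\} \in E(H')\}| \geq \tau$. In the extremal hypergraph $B_n$ with bipartition $V = A \dju B$, same-side pairs have codegree $\approx n/2$ while crossing pairs have codegree $n - 2$, so $G$ is precisely the bipartite graph $K_{A,B}$; the aim is to show that the analogous $G$ for $H$ is close to being bipartite.

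The crucial structural claim is the Fano-forcing step: if $G$ contains a triangle $\{u,v,w\}$, then the three large codegree sets $N(u,v)$, $N(u,w)$, $N(v,w)$ combined with the density of $H$ locate four further vertices $a,b,c,d$ completing a copy of $\Fano$. Concretely, viewing $u,v,w$ as three non-collinear points of a Fano, pick $a \in N(u,v)$, $b \in N(u,w)$, $c \in N(v,w)$ corresponding to the third points on the three ``known'' Fano lines through $uv$, $uw$, $vw$, and find $d$ so that the four remaining Fano lines are also hyperedges of $H$; a careful counting, using that the number of non-edges in $H$ is at most $(1/4 + \delta)\binom{n}{3}$, forces such a configuration to exist. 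Consequently $G$ has only $o(n^3)$ triangles (with explicit bound polynomial in $\delta$), and an explicit Mantel-type stability theorem yields a partition $V = A \dju B$ for which $G$ has at most $\eta' n^2$ monochromatic pairs. Transferring back, every hyperedge of $H$ lying inside $A$ or inside $B$ contributes either a monochromatic pair of $G$ or a pair of below-threshold codegree, of which there are few since the total codegree sum is near-maximal; both contributions combine to give $e_H(A)+e_H(B) < \eps n^3$ for $\delta$ polynomial in $\eps$.

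I expect the Fano-forcing counting step to be the principal obstacle. Making it effective with polynomial error bounds (rather than via qualitative removal or hypergraph regularity) requires controlling the codegree distribution inside the three neighborhoods $N(u,v)$, $N(u,w)$, $N(v,w)$, and may demand an additional cleaning to eliminate pairs with ``intermediate'' codegree (well away from both $n/2$ and $n - 2$). Once the near-triangle-freeness of $G$ has been established with an explicit bound, the remaining steps---initial cleaning, quantitative Mantel stability, and counting monochromatic hyperedges from the bipartition of $G$---are more routine, amounting to combining standard polynomial-error estimates to reach the final polynomial dependence $\delta = \mathrm{poly}(\eps)$.
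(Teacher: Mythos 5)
Your proposal follows a genuinely different route from the paper. The paper implements the original Keevash--Sudakov strategy: after cleaning, find a $K_4^{(3)}$ (Chung--Lu), take the link multigraph of its four vertices, and bound that multigraph using F\"uredi--K\"undgen and the De~Caen--F\"uredi lemma (four vertices spanning $\geq 21$ edges of the link multigraph force a $\Fano$). A case analysis on whether some triple spans $\geq 11$ edges then yields a near-bipartition of the multigraph, and a $K^{(3)}(2,2,2)$-counting lemma applied inside each class finishes the argument. You instead propose a ``codegree graph'' $G$ (pairs with codegree above a threshold $\tau \approx 2n/3$), and aim to show $G$ is essentially triangle-free, then invoke quantitative Mantel stability and transfer back. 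That approach is closer in spirit to auxiliary-graph arguments elsewhere in the literature and would, if it worked, be clean and self-contained; the paper's approach has the advantage that every Fano-forcing configuration it uses is either a known lemma or a short matching/Hall argument.

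However, there is a genuine gap, and you flag it yourself: the key Fano-forcing claim---that a triangle $\{u,v,w\}$ in $G$, together with the density of $H$, yields a $\Fano$---is stated but not established, and it is not straightforward. The issue is precisely the one you identify: to complete the Fano on $u,v,w,a,b,c,d$ you also need the pairs $\{u,c\}$, $\{v,b\}$, $\{w,a\}$ to have large codegree sets with a common vertex $d$, and $\{a,b,c\}$ to be a hyperedge, but having $\{u,v\}$, $\{u,w\}$, $\{v,w\}$ above threshold gives no direct control on the codegrees of the new pairs. Averaging over $(a,b,c) \in N(u,v)\times N(u,w)\times N(v,w)$ alone does not suffice, since the error from near-extremal $H$ is comparable to the quantities being averaged; some additional structural preprocessing (e.g.\ eliminating intermediate-codegree pairs, as you suggest) would be needed, but is not carried out. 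Until this step is made precise with explicit bounds, the remainder of the plan (Mantel stability, transferring the bipartition) cannot yield the claimed polynomial dependence of $\delta$ on $\eps$. By contrast, the paper sidesteps this entirely: its Fano-forcing happens inside the link multigraph of a fixed $K_4^{(3)}$, where De~Caen--F\"uredi and short explicit matching arguments (Lemma~\ref{lemma:lemma_2.5}) do the work with no counting subtleties.
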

In this note, we wish to compute the dependency between $\delta$ and $\eps$ explicitly. Our motivation was to compute explicit bounds for an Erd\H{o}s-Rothschild-type problem involving the Fano plane~\cite{ERFano}, where we apply a stability method. 

With a careful analysis of the proof of~\cite[Theorem 1.2]{keevash2005} and performing some modifications to tighten the bounds, we obtain the following result.
\begin{theorem}\label{main}
For any fixed $0 < \delta <  (1/36)^8$,  there exists $n_0$ such that the following holds for all $n \geq n_0$. If $H = (V,E)$ is a $\Fano$-free $3$-uniform hypergraph on $n \geq n_0$ vertices with $\ex(n, \Fano)- \delta n^3/8$ hyperedges, then there is a partition $V(H) = A \cup B$ so that $e_H(A) + e_H(B)\leq (9/16) (3^2\cdot2^4 \cdot 139)^{1/8} n^3 < 1.94  \delta^{1/64}n^3 $.
\end{theorem}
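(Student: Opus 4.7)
The plan is to follow the stability argument of Keevash and Sudakov \cite{keevash2005} step by step, but to track every multiplicative constant explicitly. Start with a $\Fano$-free $H=(V,E)$ on $n$ vertices with $|E|=\ex(n,\Fano)-\delta n^3/8$ and pick a partition $V=A\dju B$, $|A|\leq |B|$, that minimizes the bad-edge count $b:=e_H(A)+e_H(B)$, or equivalently, maximizes the number of crossing hyperedges. The task is then to upper bound $b$ by $(9/16)(3^2\cdot 2^4\cdot 139)^{1/8}n^3$.

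First I would convert the near-extremal edge count into a near-balance condition on the partition. Writing $|A|=a$, the maximum possible number of crossing triples is
\[
\binom{a}{2}(n-a)+a\binom{n-a}{2}=\frac{a(n-a)(n-2)}{2},
\]
and comparison with $|E|-b$ crossings yields an inequality on $a(n-a)$ that bounds the imbalance $||A|-|B||$ in terms of $\delta$ and $b$. Second, the minimality of $b$ under swapping one vertex across the partition forces the standard local inequalities $d_{AA}(v)\leq d_{BB}(v)$ for $v\in A$ (and vice versa), where $d_{XX}(v)$ counts hyperedges through $v$ with both other endpoints in $X$. Together with the balance estimate this produces lower bounds on the number of crossing edges incident to a typical vertex.

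The heart of the proof is to associate to each putative bad edge $e$ a family of crossing extensions whose combination with $e$ would embed a copy of $\Fano$; Fano-freeness forbids all such embeddings and so produces a polynomial lower bound on the edge deficit $\delta n^3/8$ in terms of $b$. The final exponent $1/64$ strongly suggests that this polynomial relation is obtained by six successive Cauchy--Schwarz inequalities converting a $64$th-power quantity into a linear one, after which inversion gives $b\leq C\,\delta^{1/64}n^3$. The explicit constant $(9/16)(3^2\cdot 2^4\cdot 139)^{1/8}$ should then emerge by carefully assembling the enumeration of Fano sub-configurations on few vertices and the numerical constants produced at each Cauchy--Schwarz step; the $(9/16)$ prefactor reflects the balanced-bipartite target, the $3^2\cdot 2^4$ factor reflects combinatorial multiplicities of link structures, and the $139$ presumably tallies admissible partial linear $3$-configurations on a fixed vertex set.

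The main obstacle will be executing the six Cauchy--Schwarz steps without slack, i.e.\ choosing at each step the correct pair of summation variables so that the constants combine to give the claimed $(3^2\cdot 2^4\cdot 139)^{1/8}$ rather than a larger value. A secondary difficulty is to verify that the hypothesis $\delta<(1/36)^8$ is enough to keep every intermediate quantity (numbers of crossing edges through a vertex, codegrees, common neighborhoods in link graphs) bounded away from the degenerate regimes in which the chain of inequalities would collapse; I expect this threshold to emerge naturally as the boundary below which each Cauchy--Schwarz step yields a meaningful lower bound, and below which the imbalance estimate from the first step is compatible with the local degree conditions from the second step.
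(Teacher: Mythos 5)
Your sketch does not track the actual mechanism the proof uses, and as written it would not produce the claimed bound. The paper (following Keevash and Sudakov) does not start from a partition chosen to minimize $e_H(A)+e_H(B)$ and then run a vertex-swap argument. Instead, after a first deletion pass it locates a copy of $K_4^{(3)}$ on a vertex set $S=\{a,b,c,d\}$ (via Chung--Lu, Lemma~\ref{lemma:lemma_2.2}), passes to the link multigraph $G(S)$, and derives the partition $V_1=A\cup B$ from the \emph{structure of the link multigraph}: $A$ consists of vertices whose pairs sit in $L(a)\cap L(b)$ and $B$ of vertices whose pairs sit in $L(c)\cap L(d)$, with crossing pairs in all four. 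All of the Fano-embedding work is pushed into Lemmas~\ref{lemma:lemma_2.3}--\ref{lemma:lemma_2.5}, which say that certain multiplicity patterns on a $K_4$ or $K_5$ inside $G(S)$ already force a Fano plane. Your outline has no analogue of $S$, of $G(S)$, or of these multiplicity lemmas, and the vague phrase ``associate to each putative bad edge a family of crossing extensions'' does not supply the concrete combinatorial gadget that turns Fano-freeness into an inequality.

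You also misidentify where the $1/64$ exponent comes from. It is not six nested Cauchy--Schwarz steps; it is the composition of two losses of an eighth power. First, Lemma~\ref{lemma:claim} delivers a partition that is correct on all but $\delta_9 n_1^2=O(\delta^{1/8})n_1^2$ pairs; the $\delta^{1/8}$ here accumulates from the cascade $\delta_1,\dots,\delta_8$ of deletion thresholds, each a square or fourth root of the previous one (no swap inequalities appear). Second, if one side, say $A$, still contained $\gg\delta^{1/64}n^3$ hyperedges, Lemma~\ref{lemma:lemma_2.6} (a supersaturation bound giving at least $c'\alpha^8 n^6$ copies of the octahedron $K^{(3)}(2,2,2)$ when the edge density is $\alpha$) would force more than $\delta_9 n_1^2\binom{|A'|}{4}$ octahedra inside $A$, so one of them has all three partition classes lying in $L(a)$, and Lemma~\ref{lemma:observation_2.1}(ii) then yields a Fano plane. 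The $\alpha^8$ in Lemma~\ref{lemma:lemma_2.6} is what converts $\delta^{1/8}$ into $\delta^{1/64}$. Finally, your reading of the constant is off: the factor $139$ is not a count of linear $3$-configurations; $\delta_9=417\delta^{1/8}$ is a numerical roll-up of the deletion losses, and $834=2\delta_9=2\cdot3\cdot139$ is where $139$ enters, purely as arithmetic. To salvage your approach you would have to supply, from scratch, both the link-multigraph structure theory replacing Lemmas~\ref{lemma:lemma_2.3}--\ref{lemma:lemma_2.5} and a supersaturation statement replacing Lemma~\ref{lemma:lemma_2.6}; without those, the plan has no engine.
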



\section{Preliminaries}

In this section, we state some results that will be useful in our proof of Theorem~\ref{main}. Several of the lemmas come from the paper of Keevash and Sudakov~\cite{keevash2005}. 

Let  $K_4^{(3)}$ be the complete $3$-uniform hypergraph on four vertices, which is sometimes called tetrahedron. The Tur\'{a}n number $\ex(n,K_4^{(3)})$ is still not known, and we shall use the following upper bound. 
\begin{lemma}[\bf{Chung and Lu~\cite{chung_lu99}}]\label{lemma:lemma_2.2}
Let $H$ be a $3$-uniform hypergraph on $n\geq n_0$ vertices with at least $((3 + \sqrt{17})/12 + o(1) ) \binom{n}{3} \approx 0.594 \binom{n}{3}$ hyperedges. Then $H$ contains a copy of $K_4^{(3)}$.
\end{lemma}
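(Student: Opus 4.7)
The plan is to follow the Keevash--Sudakov proof of~\cite[Theorem~1.2]{keevash2005} closely while replacing every qualitative "choose $\eps$ small enough" step by an explicit inequality. Write $\alpha = \delta n^3/8$ for the edge deficit of $H$ relative to $\ex(n,\Fano) \sim (3/4)\binom{n}{3}$. As a preliminary cleaning step, I would iteratively remove any vertex whose degree falls substantially below $(3/4)\binom{n-1}{2}$, say below $(3/4 - c\delta^{1/2})\binom{n-1}{2}$ for a suitable constant $c$. A standard averaging argument ensures that only $O(\delta^{1/2}n)$ vertices are removed, and the surviving induced subhypergraph $H^*$ on $n^*$ vertices still has at least $\ex(n^*,\Fano) - 2\alpha$ hyperedges and a uniform minimum-degree lower bound.

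The core of the argument is an analysis of the link graph $L(v) = \{xy : vxy \in E(H^*)\}$ for every vertex $v$. Since $H$ is $\Fano$-free, several specific local configurations in $L(v)$ together with hyperedges through their vertices would complete to a Fano plane and must therefore be rare. Lemma~\ref{lemma:lemma_2.2} enters here to bound the number of $K_4^{(3)}$-subhypergraphs in auxiliary hypergraphs derived from the links, thereby controlling the joint behaviour of pairs of vertices. A quantitative form of the Erd\H{o}s--Simonovits stability theorem for triangle-free graphs, applied to each $L(v)$, then shows that for all but few vertices the link graph differs from a complement-of-clique on about $n/2$ vertices by fewer than a polynomial-in-$\delta$ fraction of edges. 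Each such stability step incurs a Cauchy--Schwarz-type square-root loss in the deficit parameter, and the peculiar exponent $1/64 = 1/2^6$ in the final bound reflects six such losses compounded between the edge deficit $\delta n^3$ and the bound on $e_H(A)+e_H(B)$.

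I would then aggregate the local link bipartitions into a single global bipartition $V(H) = A \cup B$. Concretely, one picks a vertex $v_0$ whose link is closest to extremal, adopts the bipartition that $L(v_0)$ induces on $V \setminus \{v_0\}$, and assigns $v_0$ to the side minimising the number of hyperedges of $H$ contained in $A$ or $B$ through it. Every such hyperedge corresponds either to a non-edge in the link of one of its vertices (relative to the extremal structure) or to a deviation in $L(v_0)$, so summing these contributions via Cauchy--Schwarz produces the desired bound on $e_H(A)+e_H(B)$.

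The main obstacle is pinning down the explicit constant $(9/16)(3^2\cdot 2^4\cdot 139)^{1/8}$. The factor $9/16$ reflects the density $3/4$ of $B_n$ and the bipartition sizes close to $n/2$, and the small powers of $2$ and $3$ come from Kruskal--Katona-type shadow bounds and the combinatorics of the Fano plane. The awkward factor $139$ most likely arises from combining the constant $(3+\sqrt{17})/12$ of Lemma~\ref{lemma:lemma_2.2} with the cleaning thresholds and the successive Cauchy--Schwarz constants. Finally, the restriction $\delta < (1/36)^8$ presumably identifies the worst case among the six stability inequalities needed to keep the argument nontrivial, and verifying that none of the six estimates is violated in the appropriate regime requires careful bookkeeping at every step.
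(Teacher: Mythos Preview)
Your proposal does not address the stated lemma at all. Lemma~\ref{lemma:lemma_2.2} is the Chung--Lu upper bound on the Tur\'an density of $K_4^{(3)}$; the paper does not prove it but simply quotes it from~\cite{chung_lu99} as a black box. A proof would require the analytic/combinatorial machinery of that paper (or an equivalent argument bounding $\ex(n,K_4^{(3)})$), and nothing in your outline touches that problem.

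What you have written is instead a sketch of the proof of Theorem~\ref{main}, the main stability result. Even read that way, several details are off. First, Lemma~\ref{lemma:lemma_2.2} is not used ``to bound the number of $K_4^{(3)}$-subhypergraphs in auxiliary hypergraphs derived from the links''; it is applied once, directly to the high-minimum-degree subhypergraph $H[V_0]$, to locate a single copy of $K_4^{(3)}$ whose link multigraph $G(S)$ drives the rest of the argument. Second, the exponent $1/64$ does not come from six Cauchy--Schwarz steps on link graphs; it is the product of three square-root passages in the chain $\delta\mapsto\delta_1\mapsto\delta_3\mapsto\cdots$ (giving $\delta^{1/8}$ for $\delta_9$) with the eighth-power loss in Lemma~\ref{lemma:lemma_2.6} counting copies of $K^{(3)}(2,2,2)$. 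Third, the factor $139$ has nothing to do with the Chung--Lu constant $(3+\sqrt{17})/12$: it arises because $2\delta_9 = 834 = 6\cdot 139$, where $\delta_9 = 417\,\delta^{1/8}$ is the accumulated bookkeeping constant. Finally, the global bipartition is not obtained by aggregating link bipartitions around a pivot vertex $v_0$; it is read off directly from the multiplicity pattern of edges in $G(S)$ relative to two chosen vertices $p,q$ joined by a multiplicity-$4$ edge.
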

Numerical computations associated with a semidefinite program (see Razborov~\cite{razborov2010} and~\cite{FV2013}) lead to a better upper bound, but Chung and Lu's bound suffices for our purposes.

We shall also consider $K^{(3)}(2,2,2)$, the $3$-uniform hypergraph with six vertices whose vertex set may be partitioned as $V_1 \cup V_2 \cup V_3$, where $|V_i|=2$ for every $i \in [3]$, and whose edge set is given by all triples $e$ such that $|e \cap V_i|=1$ for every $i \in [3]$. Note that $K^{(3)}(2,2,2)$ contains eight edges. This hypergraph is sometimes called octahedron. Moreover, let $K_{\ell}$ be the complete graph on $\ell$ vertices.

For a $3$-uniform hypergraph $H=(V,E)$ and a vertex $x \in V$, whose neighborhood is denoted by $N_H(x)$, the \emph{link graph} $L(x)=(V',E')$ has vertex set $V'=N_H(x)$ and edge set $E'=\{e-x \colon x \in e \mbox{ and }  e\in E\}$. Of course, the number of edges of $L(x)$ is the degree of vertex $x$ in $H$. It is convenient to consider several link graphs
simultaneously, regarding them as a multigraph. This is a loopless graph in which each edge has some non-negative integral multiplicity. If $S \subseteq V$ is
a set of vertices of $H$, then the link multigraph $L(S)$ of $S$ is the multigraph given by the union of the link graphs of each vertex in $S$. Let $G(S)$ be the subgraph of the link multigraph $L(S)$ obtained by removing $S$.

\begin{lemma}
\label{lemma:observation_2.1}
Let $H=(V,E)$ be a $3$-uniform hypergraph.
\begin{itemize} 
\item [(i)] Let $\{x_1, x_2, x_3\}\in E$ be a hyperedge in $H$ and let $L(x_i)$ be the link graph of vertex $x_i$. If $G(\{ x_1, x_2, x_3 \})$ contains four vertices spanning a complete graph $K_4$  whose edge set can be partitioned into three matchings $M_1, M_2, M_3$ with $M_i \subseteq L(x_i)$ for all $i=1,2,3$, then $H$ contains a Fano plane.

\item[(ii)]  If there is a vertex $x$ whose link graph $L(x)$ contains three pairwise vertex-disjoint edges $e_1$, $e_2$, $e_3$ such that all triples $\{x_1,x_2,x_3\}$, with $x_i \in e_i$ for $i \in \{1,2,3\}$, are hyperedges in $H$, then $H$ contains a Fano plane.
\end{itemize}
\end{lemma}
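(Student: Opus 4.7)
\emph{Proof plan.} Both parts assert that a specific configuration of link-graph edges forces a copy of $\Fano$. In each case, my plan is to exhibit this copy explicitly by giving a bijection from the seven points of $\Fano$ to seven chosen vertices of $H$, and then verifying that the seven lines of $\Fano$ are all mapped to hyperedges of $H$ under this bijection.

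For (i), I would use as the seven Fano vertices the triple $\{x_1,x_2,x_3\}$ together with the four vertices $y_1,y_2,y_3,y_4$ spanning the prescribed $K_4$ in $G(\{x_1,x_2,x_3\})$. The first line of $\Fano$ would be $\{x_1,x_2,x_3\}$ itself, which is a hyperedge of $H$ by assumption. Since the $K_4$ on $y_1,\ldots,y_4$ has exactly three perfect matchings and its edge set is partitioned by $M_1\cup M_2\cup M_3$, each $M_i$ must be one of these three perfect matchings. The remaining six lines of $\Fano$ would then be the six triples $\{x_i\}\cup f$ with $i\in\{1,2,3\}$ and $f\in M_i$, each of which is a hyperedge of $H$ because $M_i\subseteq L(x_i)$. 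To close the argument I would verify that each of the $\binom{7}{2}$ pairs lies on exactly one line: pairs $\{x_i,x_j\}$ lie on the first line, pairs $\{x_i,y_j\}$ lie on the unique line $\{x_i\}\cup f$ with $y_j\in f\in M_i$ (unique because $M_i$ is a perfect matching of $\{y_1,\ldots,y_4\}$), and pairs $\{y_j,y_k\}$ lie on $\{x_i,y_j,y_k\}$ where $i$ is determined uniquely by $\{y_j,y_k\}\in M_i$.

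For (ii), writing $e_i=\{a_i,b_i\}$, I would take the seven Fano vertices to be $x,a_1,b_1,a_2,b_2,a_3,b_3$. Three lines of $\Fano$ would be the ``link lines'' $\{x,a_i,b_i\}$ for $i\in\{1,2,3\}$, each a hyperedge of $H$ since $e_i\in L(x)$. The remaining four lines would be the ``even parity'' transversals $\{a_1,a_2,a_3\}$, $\{a_1,b_2,b_3\}$, $\{b_1,a_2,b_3\}$, $\{b_1,b_2,a_3\}$; each of these is a hyperedge of $H$ by the hypothesis that every triple picking one vertex from each $e_i$ is a hyperedge. Checking that this labelled configuration is indeed $\Fano$ amounts to verifying that each pair of the seven vertices lies on exactly one line, which I would do by cases according to whether the pair involves $x$, lies inside some $e_i$, or has endpoints in distinct $e_i$'s; in the last case the two chosen endpoints already determine two coordinates of the transversal, and the parity condition determines the third uniquely.

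I do not anticipate any real obstacle beyond writing down the two labelled configurations correctly. The one step that requires a moment of thought is the parity pattern in (ii), which is what ensures that every cross-$e_i$ pair is covered exactly once by the four transversal lines; this choice corresponds to viewing $\Fano$ as the projective plane over the two-element field. Once the bijections are in place, everything reduces to a direct case check with no numerical estimates involved.
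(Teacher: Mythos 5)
Your constructions are correct and give exactly what is needed. The paper itself states this lemma without proof (it is taken over from Keevash and Sudakov, where the two parts appear as elementary observations), so there is no in-text argument to compare against; the explicit labelled copies of $\Fano$ you describe are the canonical way to prove it, and the key checks you flag do go through. In (i), since $K_4$ has six edges and a matching on four vertices has at most two edges, the partition into $M_1,M_2,M_3$ forces each $M_i$ to be a perfect matching, so the seven triples $\{x_1,x_2,x_3\}$ and $\{x_i\}\cup f$ (for $f\in M_i$) cover each of the $\binom{7}{2}=21$ pairs of your seven chosen vertices exactly once, giving a Steiner triple system on seven points, i.e.\ a copy of $\Fano$, and each triple is indeed a hyperedge of $H$ because $\{x_1,x_2,x_3\}\in E$ and $M_i\subseteq L(x_i)$. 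In (ii), the four even-parity transversals together with the three lines $\{x,a_i,b_i\}$ likewise cover each pair exactly once (the parity constraint determines the third coordinate from any two), and all seven are hyperedges of $H$ by $e_i\in L(x)$ and the transversal hypothesis. I see no gap.
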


\begin{lemma} [\bf{F\"uredi and K\"undgen~\cite{fueredi_kuendgen02}}]\label{lemma:lemma_2.3}
Every loopless multigraph  on $n$ vertices, where every four vertices span at most $20$ edges, has at most $3 \binom{n}{2} + n-2$ edges.
\end{lemma}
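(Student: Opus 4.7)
The plan is to prove the lemma by induction on $n$. The crucial observation is that the hypothesis ``every four vertices span at most $20$ edges'' is hereditary under vertex deletion: if $G$ satisfies it, then so does the loopless multigraph $G - v$ for every vertex $v \in V(G)$. This enables an inductive averaging argument across the $n$ single-vertex deletions.

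For the base case $n = 4$ the conclusion $|E(G)| \leq 20 = 3\binom{4}{2} + 4 - 2$ is just the hypothesis applied to $V(G)$ itself. For the inductive step, I would fix $n \geq 5$ and assume the lemma for multigraphs on $n - 1$ vertices. By the inductive hypothesis, $|E(G-v)| \leq 3\binom{n-1}{2} + n - 3$ for every $v \in V(G)$. Since every edge of $G$ appears in exactly $n - 2$ of the submultigraphs $G - v$ (namely, in those with $v \notin e$), summing over $v$ gives
\[
(n-2)\,|E(G)| \;=\; \sum_{v \in V(G)} |E(G-v)| \;\leq\; n\!\left[3\binom{n-1}{2} + n - 3\right].
\]
Dividing by $n - 2$ and using the identity $n(n-3) = (n-2)(n-1) - 2$ to simplify the leftover term, this reduces to
\[
|E(G)| \;\leq\; 3\binom{n}{2} + n - 1 - \frac{2}{n-2}.
\]

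The only step that requires care is the extraction of the claimed integer bound $3\binom{n}{2} + n - 2$ from this non-integer estimate, which is where the ``$+ (n-2)$'' term arises sharply. For every $n \geq 5$ the correction $2/(n-2)$ lies strictly in $(0,1)$, so the right-hand side is strictly between $3\binom{n}{2} + n - 2$ and $3\binom{n}{2} + n - 1$. Since $|E(G)|$ is an integer, this forces $|E(G)| \leq 3\binom{n}{2} + n - 2$, completing the induction. No further obstacle arises; the integrality of $|E(G)|$ is precisely what turns the simple vertex-removal averaging into a tight bound, and sharpness can be verified by taking the multigraph in which every pair has multiplicity $3$ and adding a suitable set of $n - 2$ low-impact excess edges.
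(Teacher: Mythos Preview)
Your argument is correct. The paper itself does not prove Lemma~\ref{lemma:lemma_2.3}; it is quoted from F\"uredi and K\"undgen~\cite{fueredi_kuendgen02} and used as a black box in the proof of Lemma~\ref{lemma:claim}. Your induction-and-averaging proof is a clean self-contained substitute: the hereditary nature of the $4$-vertex constraint, the double count $(n-2)|E(G)|=\sum_v|E(G-v)|$, and the integrality step are all valid, and the base case $n=4$ is exactly the hypothesis. (Note that the statement is only meaningful for $n\ge 4$, since for $n\le 3$ the hypothesis is vacuous while the bound is finite; your choice of base case handles this correctly.) The closing remark about sharpness is tangential and a bit loose --- a path of multiplicity-$4$ edges on top of uniform multiplicity $3$ would already exceed $20$ on some $4$-sets --- but this does not affect the proof of the stated inequality.
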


\begin{lemma} [\bf{De Caen and F\"uredi~\cite{decaen_fueredi00}}]\label{lemma:lemma_2.4}
Let $H$ be a $3$-uniform hypergraph which contains a copy of $K_4^{(3)}$ with vertex set $S$. Let $L(S)$ be the link multigraph of $S$ and let $G(S)$ be the multigraph obtained from $L(S)$ by deleting $S$. If the subgraph $G(S)$ contains a set of four vertices spanning at least $21$ edges, then $H$ contains a Fano plane.
\end{lemma}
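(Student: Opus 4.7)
The plan is to refine the proof of \cite[Theorem~1.2]{keevash2005}, tracking the constants throughout. I would first clean $H$ by iteratively deleting any vertex of degree less than $(3/4 - c_1 \delta^{1/8})\binom{|V(H')|-1}{2}$, obtaining a $\Fano$-free subhypergraph $H'$ of nearly maximal density with minimum degree at least $(3/4 - c_1\delta^{1/8})\binom{n-1}{2}$. The edge surplus $\delta n^3/8$ bounds the number of deleted vertices by a small multiple of $\delta^{7/8} n$, contributing only lower-order terms to the final count.

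Since the density of $H'$ substantially exceeds the threshold $(3 + \sqrt{17})/12$ of Lemma~\ref{lemma:lemma_2.2}, we obtain a copy of $K_4^{(3)}$ on some vertex set $S = \{x_1,\ldots,x_4\}$. I would then analyse the link multigraph $G(S)$ on $V(H') \setminus S$. The minimum-degree lower bound yields $|E(G(S))| \geq 4(3/4 - c_1\delta^{1/8})\binom{n-1}{2} - O(n)$, while Lemma~\ref{lemma:lemma_2.4} (combined with $\Fano$-freeness) and Lemma~\ref{lemma:lemma_2.3} give the upper bound $|E(G(S))| \leq 3\binom{n-4}{2} + n - 6$. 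The two estimates match to leading order, and their difference, of order $\delta^{1/8} n^2$, measures how far $G(S)$ is from the extremal configuration of Lemma~\ref{lemma:lemma_2.3}, in which almost every pair has multiplicity exactly $3$.

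I would then extract a bipartition. Call a pair $\{u,v\} \subseteq V(H') \setminus S$ \emph{saturated} if $\{x_i,u,v\} \in E(H')$ for every $x_i \in S$. By the previous step, all but $O(\delta^{1/8} n^2)$ pairs are saturated. I would argue that the saturated pairs must form an (almost) bipartite graph: otherwise a triangle $uvw$ of saturated pairs together with the hyperedge $\{x_1,x_2,x_3\}$ and a suitable $3$-edge-colouring of the $K_4$ on $\{u,v,w,x_4\}$ triggers Lemma~\ref{lemma:observation_2.1}(i), producing a $\Fano$ copy. The bipartition of this saturated graph extends to a partition $V(H) = A \cup B$ of the full vertex set.

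Finally, to bound $e_H(A) + e_H(B)$, I would observe that any hyperedge $e \subseteq A$ or $e \subseteq B$ either witnesses an unsaturated pair or involves a vertex whose link graph fails to respect the partition; the latter case is controlled using Lemma~\ref{lemma:observation_2.1}(ii), which forbids three pairwise vertex-disjoint "wrong-side" link edges whose endpoints form hyperedges with a common apex. Summing these contributions gives $e_H(A) + e_H(B) = O(\delta^{1/8} n^3)$, and the explicit constants $9/16$, $3^2\cdot 2^4$ and $139$ come respectively from the density $3/4$ of the extremal $B_n$, the triple-cover factor of Lemma~\ref{lemma:lemma_2.3} combined with $|S|=4$, and the cumulative contribution of the $20$/$21$ thresholds in Lemmas~\ref{lemma:lemma_2.3}--\ref{lemma:lemma_2.4}. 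The main technical obstacle is ensuring that the bipartition chosen via the tetrahedron $S$ is globally consistent; this requires a majority/voting step over many tetrahedra in $H'$, and is where the exponent $1/8$ is degraded to $1/64$ in the explicit quantitative statement.
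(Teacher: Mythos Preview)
Your proposal does not address the stated lemma at all. The statement you were asked to prove is Lemma~\ref{lemma:lemma_2.4}: if $S$ spans a $K_4^{(3)}$ in $H$ and some four vertices of $G(S)$ span at least $21$ edges (counted with multiplicity over the four link graphs), then $H$ contains a Fano plane. This is a short, purely combinatorial fact about a fixed configuration on $4+4$ vertices. What you have written instead is a sketch of the proof of the main stability theorem (Theorem~\ref{main}); you even invoke Lemma~\ref{lemma:lemma_2.4} as a black box inside your argument. So the proposal is a proof of the wrong statement.

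For the record, the paper does not reprove Lemma~\ref{lemma:lemma_2.4} either (it is quoted from De~Caen and F\"uredi), but the method is visible in the proof of Lemma~\ref{lemma:lemma_2.5} just below it: take the four vertices $y_1,\dots,y_4$ spanning $\geq 21$ edges, decompose the edge set of the $K_4$ on $\{y_1,\dots,y_4\}$ into three perfect matchings $M_1,M_2,M_3$, and for each $i$ let $J_i\subseteq S$ be the set of $x\in S$ with $M_i\subseteq L(x)$. The multiplicity hypothesis forces the family $(J_1,J_2,J_3)$ to satisfy Hall's condition (this is where the threshold $21$ versus $20$ is used, possibly after choosing the decomposition judiciously), so one finds distinct $x_1,x_2,x_3\in S$ with $M_i\subseteq L(x_i)$; since $\{x_1,x_2,x_3\}$ is a hyperedge of the tetrahedron, Lemma~\ref{lemma:observation_2.1}(i) yields a Fano plane. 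None of the global ingredients you describe (vertex cleaning, density comparisons, bipartition extraction, the exponent degradation $1/8\to 1/64$) are relevant to this lemma.
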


Parts of the following lemma are from~\cite{keevash2005}.
\begin{lemma} \label{lemma:lemma_2.5}
Let $H=(V,E)$ be a $3$-uniform hypergraph which contains a copy of $K_4^{(3)}$ with vertex set $S$. Let $E'$ be the set of edges in $G(S)$ of multiplicity at least $3$.
\begin{itemize}
\item[(i)] If $E'$ contains a copy of $K_4$ that has at least one edge of multiplicity $4$, then $H$ contains a Fano plane.
\item[(ii)] If $E'$ contains a copy of $K_5$, then $H$ contains a Fano plane.
\item[(iii)] If $G(S)$ contains a copy of $K_4$ with vertex set $\{y_1, y_2, y_3, y_4 \}$, where the edges 
$\{y_1, y_2 \}$ and $\{y_1, y_3 \}$ both have multiplicity $4$, and edge $\{ y_2, y_3\}$ has multiplicity at least $3$, and the other three edges have in some order multiplicities at least $4,3,2$, then $H$ contains a Fano plane.
\item[(iv)]  If $G(S)$ contains a copy of $K_4$ with vertex set $\{y_1, y_2, y_3, y_4 \}$, where  edge 
$\{y_1, y_2 \}$ has multiplicity $4$ and edge $\{y_3, y_4 \}$ has multiplicity at least $2$, and edge $\{y_1, y_3 \}$ has multiplicity at least $2$ and edge $\{y_2, y_3 \}$ has multiplicity $4$, and the remaining edges have multiplicities at least $3$, 
then $H$ contains a Fano plane.
\item[(v)]  If $G(S)$ contains a copy of $K_4$ with vertex set $\{y_1, y_2, y_3, y_4 \}$, where  edges 
$\{y_1, y_2 \}$ and $\{y_1, y_3 \}$ have multiplicities $4$ and edge $\{y_2, y_3 \}$ has multiplicity at least $3$, and edge  $\{y_1, y_4 \}$ has multiplicity $4$ and edge $\{y_2, y_4\}$ has multiplicity at least $2$  and edge $\{y_3, y_4 \}$ has multiplicity at least $1$, 
then $H$ contains a Fano plane.
\end{itemize}
\end{lemma}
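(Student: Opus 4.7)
The plan is to derive each of the five parts from Lemma 2.1(i), the natural tool since a $K_4$ on four vertices $\{y_1,y_2,y_3,y_4\}$ of $G(S)$ is already in hand in every case. Applying Lemma 2.1(i) to a $3$-subset $\{x_{i_1},x_{i_2},x_{i_3}\}$ of $S$ (which is automatically a hyperedge of $H$ because $S$ induces a copy of $K_4^{(3)}$) requires a partition of the six edges of the $K_4$ into three matchings, one per link graph $L(x_{i_j})$. Since no matching in $K_4$ has more than two edges, such a partition must consist of the three perfect matchings $P_1,P_2,P_3$ of $K_4$, so one must assign each $P_j$ to a distinct $x_{i_j}\in S$ with $P_j\subseteq L(x_{i_j})$. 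For each edge $e$ of the $K_4$ write $A_e\subseteq S$ for the set of vertices of $S$ whose link graph contains $e$, so that $|A_e|$ equals the multiplicity $m_e$ of $e$ in $L(S)$; for each perfect matching $P=\{e,e'\}$ set $A_P:=A_e\cap A_{e'}$, which satisfies $|A_P|\geq m_e+m_{e'}-4$. The task reduces to exhibiting distinct representatives $x_{i_j}\in A_{P_j}$, which by Hall's theorem boils down to checking $|A_{P_j}|\geq 1$, $|A_{P_i}\cup A_{P_j}|\geq 2$ and $|A_{P_1}\cup A_{P_2}\cup A_{P_3}|\geq 3$; the leftover vertex of $S$ plays no role.

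With this framework in place, parts (i), (iii), (iv) and (v) become essentially inequality checks. For part (i), the distinguished edge of multiplicity $4$ lies in a unique perfect matching, say $P_1$, so $|A_{P_1}|\geq 4+3-4=3$, while $|A_{P_2}|,|A_{P_3}|\geq 3+3-4=2$; all three Hall conditions hold with room to spare since $|A_{P_1}|$ alone meets the triple-union bound. For parts (iii)--(v) I would go through the handful of sub-cases generated by each ``in some order'' clause, match the prescribed multiplicities to the three perfect matchings of $K_4$, and verify in each sub-case that some perfect matching $P$ satisfies $|A_P|\geq 3$ (so the triple condition holds) while the remaining two have $|A_P|\geq 2$ (so the pair and singleton conditions follow). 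The explicit multiplicity profiles in (iii)--(v) were designed precisely for this arithmetic to succeed; care is only needed to pick the best labeling of the ``free'' edges before applying Hall.

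Part (ii) is the genuinely delicate case, because $|A_{P_j}|\geq 3+3-4=2$ alone does not rule out the possibility that $A_{P_1}=A_{P_2}=A_{P_3}$ coincides with a single $2$-subset of $S$, which would defeat the triple Hall condition. To get around this, I would use the observation that in a $K_5$ whose every edge has multiplicity at least $3$, each edge is missing from at most one of the four link graphs $L(x_j)$, so the missing sets $M_{x_j}:=E(K_5)\setminus L(x_j)$ are pairwise disjoint subsets of a $10$-element set and satisfy $\sum_{j=1}^{4}|M_{x_j}|\leq 10$. Iterating over the five choices of $K_4\subseteq K_5$ obtained by deleting a single vertex of the $K_5$ and tracking how the missing edges distribute among these five $4$-subsets, one shows that at least one $4$-subset yields cover sets $A_{P_1},A_{P_2},A_{P_3}$ that are not all contained in a common $2$-subset of $S$, so Hall's triple condition holds. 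This final case split is the main obstacle, but the combinatorics are very constrained and the verification is short and mechanical.
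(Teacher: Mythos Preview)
Your approach is essentially the one the paper uses: reduce everything to Lemma~2.1(i) by partitioning the $K_4$ into its three perfect matchings and finding a system of distinct representatives among the sets $A_{P_j}\subseteq S$ via Hall's theorem. The paper does exactly this for (i), (iv), (v) and declares (ii), (iii) ``similar''.

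One correction: your claim that in parts (iii)--(v) the bounds come out to $|A_P|\geq 3$ for one matching and $|A_P|\geq 2$ for the other two is not quite right. In (iv), for instance, the three perfect matchings pair up edges of multiplicities $(4,\geq 2)$, $(\geq 2,\geq 3)$, and $(\geq 3,4)$, giving lower bounds $2,1,3$ on $|A_{P_j}|$; similarly (v) gives $1,2,3$. The paper records precisely these numbers. Fortunately Hall's condition still holds: with lower bounds $1,2,3$ one has each $|A_{P_j}|\geq 1$, any pair's union has size at least $2$ (because one of the two sets already has size $\geq 2$), and the triple union has size at least $3$ (because one set has size $\geq 3$). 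So your argument survives, but you should expect $1$ rather than $2$ for one of the matchings when you carry out the check.

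For (ii) your plan is sound. Once you observe (using~(i)) that every edge of the $K_5$ may be assumed to have multiplicity exactly $3$, the missing sets $M_{x_j}$ partition the ten edges of $K_5$. If every $K_4$ were ``bad'' for Hall's triple condition, then for each $K_4$ exactly two of the $M_{x_j}$ meet it, each in a $3$-edge transversal of the perfect matchings (i.e.\ a triangle or a star), and these two transversals partition $E(K_4)$. A short case analysis on $|M_{x_j}|$ then forces each nonempty $M_{x_j}$ to be a full star at some vertex of the $K_5$, hence $|M_{x_j}|\in\{0,4\}$; but $\sum_j |M_{x_j}|=10$ is not a multiple of $4$, a contradiction. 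This is a clean way to finish the ``mechanical'' verification you allude to.
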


\begin{proof} We prove parts (i), (iv) and (v), the others are similar. 
 For part (i), partition the edge set of the copy of $K_4$ into three matchings $M_1,M_2,M_3$ of size two and assume that $M_3$ contains an edge with multiplicity 4. Since the edges of $M_1$ and $M_2$ have  multiplicity at least three, we may find distinct vertices $x_1,x_2 \in S$ such that the edges of $M_1$ lie in $L(x_1)$ and the edges of $M_2$ lie in $L(x_2)$. The edge with least multiplicity in $M_3$ must contain a vertex $x_3 \notin \{x_1,x_2\}$. The other edge has multiplicity 4 and hence contains $x_3$. We apply Lemma~\ref{lemma:observation_2.1}~(i) to get a copy of a  $\Fano$ plane.

For part~(iv), let $S=\{u_1,u_2,u_3,u_4\}$  and let $L(u_i)$ be the link graph for $u_i$, $i = 1,2,3,4$. Partition the edge set of the copy of $K_4$  with vertex set $\{y_1,y_2,y_3,y_4\}$ into three edge-disjoint matchings $M_1, M_2, M_3$. Let $J_i$ be the indices of the link graphs that contain both edges of $M_i$. Then, in some order, we have the following lower bounds on the sizes of the $J_i$: $4+2 - 4 = 2$, and $2+3-4=1$, and  $4+3-4 = 3$. The result follows by Lemma~\ref{lemma:observation_2.1}~(i) and Hall's theorem.

The proof of part~(v) is similar to that of part~(iv). Namely, let $S=\{u_1,u_2,u_3,u_4\}$  and let $L(u_i)$ be the link graph for $u_i$, $i = 1,2,3,4$. Partition the edges of the copy of $K_4$ on $\{y_1,y_2,y_3,y_4\}$ into three edge-disjoint matchings $M_1, M_2, M_3$. Let $J_i$ be the indices of the link graphs that contain both edges of $M_i$. Then, in some order, we have the following lower bounds on the sizes of the $J_i$: $4+1 - 4 = 1$, and $2+4-4=2$, and  $4+3-4 = 3$. The result follows by Lemma~\ref{lemma:observation_2.1}~(i) and Hall's theorem.
\end{proof}

\begin{lemma} \label{lemma:lemma_2.6}
For every fixed $0 < \alpha < 1/6$, there is a constant $n_0$ such that the following holds for every integer $n \geq n_0$. Let $H=(V,E)$ be a $3$-uniform hypergraph with $|V|=n$ and $|E| \geq \alpha n^3$. Then, the number of copies of $K^{(3)}(2,2,2)$ in $H$ is at least 
$$ c' \cdot \alpha^8  n^6 = \frac{3^{7}}{2^{11}} \cdot \alpha^8 n^6.
 $$
\end{lemma}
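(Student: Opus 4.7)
The plan is a nested convexity argument counting copies of $K^{(3)}(2,2,2)$ through a distinguished pair-part. For distinct $u,v\in V$ let $d(u,v)=|\{w:\{u,v,w\}\in E\}|$ denote the codegree, so that $\sum_{\{u,v\}}d(u,v)=3|E|\geq 3\alpha n^3$. Jensen's inequality applied to the convex map $x\mapsto\binom{x}{2}$ gives $\sum_{\{u,v\}}\binom{d(u,v)}{2}\geq\binom{n}{2}\binom{\bar d}{2}$, with $\bar d=3|E|/\binom{n}{2}$, a quantity of order $9\alpha^2 n^4$ asymptotically. A double-counting swap rewrites the left-hand side as $\sum_{\{c_1,c_2\}}p(c_1,c_2)$, where $p(c_1,c_2)$ counts pairs $\{u,v\}$ disjoint from $\{c_1,c_2\}$ with both $\{u,v,c_1\},\{u,v,c_2\}\in E$.

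The key observation is that the $p(c_1,c_2)$ pairs are exactly the edges of an auxiliary graph $G_{c_1,c_2}$ on $V\setminus\{c_1,c_2\}$, and that each copy of $K^{(3)}(2,2,2)$ containing $\{c_1,c_2\}$ as one part corresponds bijectively to a $K_{2,2}$ (an unordered pair of disjoint vertex pairs) in $G_{c_1,c_2}$. Since each octahedron has three pairs, the total number of copies equals $\tfrac{1}{3}\sum_{\{c_1,c_2\}}\#K_{2,2}(G_{c_1,c_2})$. For any graph $G$ with $m$ edges on $n$ vertices, a second round of convexity (counting cherries via $\sum_v\binom{d_G(v)}{2}\geq n\binom{2m/n}{2}$ and then applying Jensen to common-neighbor counts of vertex pairs) yields $\#K_{2,2}(G)\geq c_0\, m^4/n^4$ for an absolute constant $c_0>0$. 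Substituting $m=p(c_1,c_2)$ and using one final convexity step $\sum_{\{c_1,c_2\}}p(c_1,c_2)^4\geq\binom{n}{2}\bar p^4$, with $\bar p$ of order $18\alpha^2 n^2$, produces a bound of the required form $c\,\alpha^8 n^6$.

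The main difficulty is the bookkeeping of constants through these four nested convexity steps. The target constant $\tfrac{3^7}{2^{11}}\approx 1.07$ is substantially weaker than what a tight Sidorenko-type calculation would yield (roughly $2^{4}\cdot 3^{7}\,\alpha^8 n^6=34992\,\alpha^8 n^6$), so the argument carries ample slack; the hypothesis $\alpha<1/6$ together with $n\geq n_0$ ensures that $\bar d$, $\bar p$, and the intermediate averages such as $\bar k$ are all large, so that each invocation of $\binom{x}{2}\geq \tfrac{x(x-1)}{2}$ (or the cruder $\tfrac{x^2}{4}$) loses only a lower-order correction that is absorbed into a $1-o(1)$ factor by taking $n_0$ sufficiently large, delivering the explicit constant $\tfrac{3^7}{2^{11}}$.
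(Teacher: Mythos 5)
Your approach is a genuine reorganization of the paper's, not a reproduction. The paper's proof is also a nested Jensen argument producing octahedra from $4$-cycles, but it proceeds vertex by vertex: after a preliminary cleaning step it lower-bounds the number of $C_4$'s in each single-vertex link graph $L(v)$ in terms of $\deg(v)$, and the distinguished pair-part of $K^{(3)}(2,2,2)$ emerges only at the very end, when a final Jensen is applied to the number of link graphs sharing a common $C_4$. You instead distinguish the pair $\{c_1,c_2\}$ first and count $K_{2,2}$'s in the intersection link graph $G_{c_1,c_2}$. These are dual organizations of the same count.

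As written, though, there is a real gap. The inequality ``$\#K_{2,2}(G)\ge c_0\,m^4/n^4$ for every graph $G$ with $m$ edges on $n$ vertices'' is false for sparse $G$: if $G$ is a single edge the left side is $0$ and the right side is positive. The cherry-then-codegree derivation only produces a non-vacuous bound once $m$ is on the order of $n^{3/2}$, because the intermediate expression $\binom{\overline{\mathrm{codeg}}}{2}$ (with the real-valued extension $\binom{y}{2}=\tfrac{y(y-1)}{2}$) goes negative below that range. You then apply this pointwise bound with $m=p(c_1,c_2)$ and sum over pairs, but nothing in your argument guarantees that each individual $p(c_1,c_2)$ is large --- only the average $\bar p$ is; a hypergraph meeting the hypotheses can have many pairs $\{c_1,c_2\}$ with $p(c_1,c_2)=0$. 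Observing that $\bar d$, $\bar p$, $\bar k$ are large, as you do, only certifies the Jensen steps themselves; it says nothing about the per-pair lower bound you need \emph{before} Jensen can be applied. This is precisely the difficulty the paper resolves with its preliminary deletion loop, which strips away all vertices of degree $\le n^{3/2}$ so that every surviving link graph satisfies the crude estimate $\binom{y}{2}\ge y^2/4$ at each stage. Your version would need an analogous restriction --- e.g.\ to pairs with $p(c_1,c_2)\ge n^{3/2}$, together with a check that such pairs carry essentially all the mass of $\sum p(c_1,c_2)$, and a re-done Jensen step over the restricted index set --- and that bookkeeping, not merely ``ample slack,'' is what actually pins down the explicit constant $3^7/2^{11}$.
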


\begin{proof}
Let $\alpha<1/6$ and $H$ as in the statement of the lemma, fix $H_0=H$ and consider the following deletion process starting with $j=0$. While there is a vertex $v_i \in V(H_j)$ whose degree satisfies $d_{H_j}(v_i) \leq n^{3/2}$, define $H_{j+1}=H_j-v_i$. Let $H'=(V',E')$ be the hypergraph obtained at the end of this process, where $V' = \{v_1, \ldots , v_{n'} \}$. Assume that $xn$ vertices have been deleted upon reaching termination, where $0 \leq x\leq 1$. Then we must have 

$$
 xn^{\frac{5}{2}}  +   \binom{(1-x)n}{3} \geq \alpha n^3,
 $$
which implies, for $n \geq 4/\alpha^2$,
$$
\frac{(1-x)^3n^3}{6} \geq  \frac{\alpha n^3}{2},
 $$
 thus $x\leq 1 - (3\alpha)^{1/3}$. So $H' = (V',E')$ has $|V'|= n'\geq (3\alpha)^{1/3}n$ vertices and at least
$$
\alpha n^3 - ( 1 - (3\alpha)^{\frac{1}{3}}) n^{\frac{5}{2}} \geq  \frac{\alpha n^3}{2}
$$
hyperedges, for $n \geq 4/\alpha^2$. Let $\alpha' = \alpha /2$, hence $|E'| \geq \alpha' n^3$. 

For any real number $y > 1$, set $\binom{y}{2} = \frac{y(y-1)}{2}$. For $y \geq 2a$, we will use the inequality
\begin{eqnarray} \label{realbinomial}
a \cdot \binom{\frac{y}{a}}{2} \geq \frac{y^2}{4a}.
\end{eqnarray}

We shall give a lower bound on the number of $4$-cycles in the link graphs $L(v)$ for all $v \in V'$.
Consider the link graph $L(v_i)$ of vertex $v_i\in V'$ in $H'$, so that $L(v_i)$ contains $d_i= \deg_{H'}(v_i)$ edges, $i = 1, \ldots , n'$. 
Let $d_i(v_j)$ be the degree of vertex $v_j \in V'$ in the link graph $L(v_i)$, $j = 1, \ldots , n'$, where, for convenience, we assume that $v_i$ lies in $L(v_i)$ with $d_i(v_i)= 0$. 
The number of (unordered) pairs of edges incident with vertex $v_j$ in $L(v_i)$, that is the number of pairs of triples $\{ v_i, v_j , v\} \in  E'$,
is $\binom{d_i(v_j)}{2}$. Summing over all $j$ and using the convexity of the function $f(x) = \binom{x}{2}$,
i.e., $\sum_{i=1}^\ell \binom{x_i}{2} \geq \ell \cdot \binom{\sum_{i=1}^\ell x_i/\ell}{2}$, the total number of pairs of edges in $L(v_i)$  incident with some vertex  in $V(L(v_i))$ is
\begin{eqnarray} \label{eq:pairs1}
\sum_{j=1}^{n'}  \binom{d_i(v_j)}{2} &\geq& n' \cdot \binom{\frac{\sum_{j=1}^{n'} d_i(v_j)}{n'}}{2} = n'\cdot \binom{\frac{2d_i}{n'}}{2} \geq  n\cdot \binom{\frac{2d_i}{n}}{2} \stackrel{(d_i \geq n; \, \eqref{realbinomial})}{\geq} \frac{d_i^2}{n} .
\end{eqnarray}

To count the number $m_i(C_4)$ of $4$-cycles in the link graph $L(v_i)$, consider the expression
$$2 \cdot m_i(C_4) = \sum_{\{v_j, v_\ell \} \in [V(L(v_i))]^2} \binom{|N_{L(v_i)}(v_j) \cap N_{L(v_i)}(v_\ell)|}{2},$$
By an averaging argument,  we obtain 
\begin{eqnarray} \label{eq:pairs3}
2 \cdot m_i(C_4) &\geq& \binom{n'}{2} \cdot \binom{\frac{ \sum_{\{v_j, v_\ell \} \in [V(L(v_i))]^2} |N_{L(v_i)}(v_j) \cap N_{L(v_i)}(v_\ell)|}{ \binom{n'}{2}}}{2} \nonumber \\
& = &  \binom{n'}{2} \cdot \binom{\frac{   \sum_{t=1}^{n'}  \binom{  d_i(v_t)}{2} }{ \binom{n'}{2}}}{2} \nonumber \\
& \stackrel{(\ref{eq:pairs1})}{\geq}&  \binom{n'}{2} \cdot \binom{\frac{d_i^2/n }{\binom{n'}{2}} }{2} \nonumber \\
&\stackrel{(d_i \geq n^{3/2}; \, \eqref{realbinomial})}{\geq}&
\frac{d_i^4}{2 n^4}  .
\end{eqnarray}

Inequality~(\ref{eq:pairs3}), together with $\sum_{i=1}^{n'} d_i \geq 3\alpha' n^3$ and the convexity of the function $g(x) = x^4$, leads to
\begin{eqnarray} \label{eq:pairs4}
\sum_{i=1}^{n'} m_i(C_4) \geq \sum_{i=1}^{n'} \frac{d_i^4}{4n^4} \geq \frac{n'}{4n^4} \cdot \left( \frac{\sum_{i=1}^{n'} d_i}{n'} \right)^4 \geq
\frac{1}{4n^4} \cdot \frac{\left(3\alpha' n^3\right)^4}{n'^3} \geq \frac{3^4}{4}  \cdot \alpha'^4 \cdot n^5.
\end{eqnarray}

Notice that the same cycle $C_4$ in the link graphs $L(v_i)$ and $L(v_{j})$, $i \neq j$, gives a copy of $K^{(3)}(2,2,2)$ in $H'$. We may now use an averaging argument to find a lower bound on the number of copies of $K^{(3)}(2,2,2)$ in $H'$ (hence in $H$). If both link graphs $L(v_i)$ and $L(v_j)$ have a copy of a $C_4$ on the vertex set $w_i, w_j, w_k, w_\ell$, the copies must be the \emph{same} in order to produce a copy of $K^{(3)}(2,2,2)$ in $H$. For fixed vertices  $w_i,w_j,w_k,w_\ell$, let $C_4(\{ w_i, w_j; w_k, w_\ell\})$ be the number of link graphs in  $L(v_1),\ldots,L(v_{n'})$ that contain a copy of $C_4$ with vertex set $\{w_i,w_j,w_k,w_\ell\}$ such that the bipartition is $\{w_i,w_j\} \cup \{w_k,w_\ell\}$. As we sum over all possible copies of $C_4$ in the sum below, since every copy of $K^{(3)}(2,2,2)$ is counted three times, we have
\begin{eqnarray*}
&&\frac{1}{3} \sum_{\{i,j,k,\ell\} \in [n']^4} \left( \binom{C_4(\{ w_i, w_j; w_k, w_\ell\})}{2}+\binom{C_4(\{ w_i, w_k; w_j, w_\ell\})}{2}+\binom{C_4(\{ w_i, w_\ell; w_j, w_k)\}}{2} \right)\\
&\geq& \frac{1}{3} \sum_{\{i,j,k,\ell\} \in [n']^4} 3 \binom{\frac{C_4(\{ w_i, w_j; w_k, w_\ell\})+C_4(\{ w_i, w_k; w_j, w_\ell\})+C_4(\{ w_i, w_\ell; w_j, w_k\})}{3}}{2}\\
&\geq&
\frac{1}{3} \cdot 3  \binom{n'}{4} 
\cdot 
\binom{\frac{\sum_{i=1}^{n'} m_i(C_4)} 
{ 3\binom{n'}{4}  }}{2} 
\nonumber \\
&\stackrel{(\ref{eq:pairs4})}{\geq} &  \frac{1}{3} \cdot 3 \binom{n'}{4}
\cdot  \binom{ \frac{ \frac{3^4}{4}  \cdot \alpha'^4 \cdot n^5}{3\binom{n'}{4} }}{2} \nonumber \\
&\stackrel{\eqref{realbinomial}}{\geq}&  \frac{3^7}{2^3} \cdot \alpha'^8 \cdot n^{6} \nonumber \\
&\stackrel{(\alpha' = \alpha/2)}{\geq}&  \frac{3^{7} \cdot\alpha^8 \cdot n^6}{2^{11}}, \nonumber
\end{eqnarray*}
as claimed.
\end{proof}

\section{Proof of Theorem~\ref{main}}

In this section, we prove Theorem~\ref{main}. Our argument follows the steps of~\cite[Theorem 1.2]{keevash2005}, but we adjust the constants in a way that gives us a better dependency between $\delta$ and $\eps$. No attempt was made to find good bounds on $n_0$.

\begin{proof}[Proof of Theorem~\ref{main}]
Throughout the proof we assume that $n$ is sufficiently large and $\delta > 0$  is fixed and sufficiently small. With foresight, we fix $\delta< (1/36)^8$ and, using $c' = 3^7/2^{11}$ from Lemma~\ref{lemma:lemma_2.6}, we also fix

$\delta_1 = (5/3)\delta^{1/2}$   

$\delta_2 = \delta^{1/2}$

$\delta_3 = \delta_1^{1/2}$ 

$\delta_4 =  \delta_1^{1/2}$

$\delta_5 = 6 \delta_3 = 6  \delta_1^{1/2}$ 

$\delta_6 = 18 \delta_1^{1/2}$

$\delta_7 = \sqrt{260} \delta^{1/8}$

$\delta_8 = 90 \delta_1^{1/4}$

$\delta_9 = 417 \delta^{1/8}$

$\delta_{10} = 9 \delta_3 = 9\delta_1^{1/2}$

$\delta_{11} = (27/64)  (834/c')^{1/8} \delta^{1/64}  < 1.94  \delta^{1/64} $.


Let $H$ be a $3$-uniform $\Fano$-free hypergraph on $n\geq n_0$ vertices whose number of hyperedges is at least  $ \ex(n,\Fano) - \delta n^3/8$. If $H$ contains a vertex of degree smaller than $(1 - \delta_1) 3n^2/8 +3n$, then we delete this vertex and continue this process as long as there are vertices of degree less than   $(1 - \delta_1) 3n^2/8 +3n$. If we had deleted
$\delta_2 n -4$ vertices, then we would have arrived at a hypergraph on $(1 - \delta_2)n +4$ vertices with at least
\begin{eqnarray*} \label{eq:-4}
&& \ex(n,\Fano) - \delta n^3/8 - (\delta_2 n - 4)  ((1-\delta_1)3n^2/8 + 3n) \nonumber \\
&\geq& (1 - 2\delta -3 \delta_2(1-\delta_1))  \ex(n,\Fano) + 12(1-\delta_1)n^2/8 - 3\delta_2n^2 + 12n 
\end{eqnarray*}
hyperedges, for $n$ sufficiently large. We claim that, for $n$ sufficiently large,  for the choice $\delta_1 = (5/3)\delta^{1/2}$ and $\delta_2 = \delta^{1/2}$
this is larger than 
$$
\ex((1-\delta_2)n +4, \Fano),
$$
which would imply that there is a Fano plane in $H$, a contradiction. We will show that
\begin{eqnarray} \label{eq:lh1}
1 - 2\delta -3 \delta_2(1-\delta_1) > (1 - \delta_2)^3, 
\end{eqnarray}
which will confirm the claim for $n$ sufficiently large. Now (\ref{eq:lh1}) is equivalent to 
\begin{eqnarray} \label{eq:lh2}
-2\delta + 3 \delta_1 \delta_2 - 3 \delta_2^2 + \delta_2^3 > 0. 
\end{eqnarray}
By our choice of $\delta_1, \delta_2$ by (\ref{eq:lh2}) this is equivalent to
\begin{eqnarray*} \label{eq:lh3}
 \delta^{3/2}  > 0,
\end{eqnarray*}
 as claimed. Let $V_0$ be the subset of $V$ with all vertices that have not been deleted in this process. Hence  all degrees in the subhypergraph $H[V_0]$ induced by $V_0$ are at least $(1-\delta_1)3n^2/8 +3n$. Our discussion implies that
$n_0 = |V_0| \geq n-\delta^{1/2}n +4$. Note that the number of hyperedges of $H$ that are not in $H[V_0]$ is at most $\delta_2 n (1 - \delta_1)  3n^2/8 + 3\delta_2 n^2 \leq  3\delta^{1/2} n^3/8$, for sufficiently large $n$.
Since  $\delta < 1/400$, the subhypergraph $H[V_0]$ contains at least 
$$\ex(n, \Fano)- \delta n^3/8 - 3\delta^{1/2} n^3/8\geq 0.6 \cdot  \binom{n_0}{3}$$
 hyperedges for sufficiently large $n$, and by Lemma~\ref{lemma:lemma_2.2} it contains a copy of the complete hypergraph $K_4^{(3)}$ with vertex set $S=\{a,b,c,d\}$. Let $L(a)$, $L(b)$, $L(c)$ and $L(d)$ be the link graphs of the vertices in $S$. The number of edges in $L(S)$ that are incident with at least one vertex in $S$ is at most $12 n$, as any vertex $x \in V$ is incident with at most three vertices of $S$ in each of the four link graphs.  
 Let $V_1 = V_0 \setminus S$ and $n_1 = |V_1| \geq (1-\delta_2)n$. The link multigraph $G=G(S)$ contains at least $e(L(a)) + e(L(b)) + e(L(c)) + e(L(d)) - 12n  \geq  4[(1-\delta_1) 3 n^2 /8 + 3n] - 12n \geq (1-\delta_1)3 n_1^2/2$ edges. Theorem~\ref{main} is a consequence of the following structural result for the multigraph $G$.
\begin{lemma} \label{lemma:claim}
There is a partition of $S$ into two sets of size two, say $\{a,b\}$ and $\{c,d\}$, for which the following holds. There exists a partition $V_1 = A \cup B $  of the vertex set $V_1$  such that all but at most $\delta_9 n_1^2$ pairs $\{v,w\} \subset V_1$ satisfy the following:
 \begin{itemize}
\item[(i)] Every pair contained in $A$ is an edge in both link graphs $L(a)$ and $L(b)$ but not an edge in $L(c)$ or $L(d)$. 
\item[(ii)] Every pair contained in $B$ is an edge in both link graphs $L(c)$ and $L(d)$ but not in $L(a)$ or $L(b)$.
\item[(iii)] Every crossing pair between $A$ and $B$ is an edge in all four link graphs $L(a)$, $L(b)$, $L(c)$ and $L(d)$.
\end{itemize}
Moreover, the partition satisfies 
$$|A|, |B| \geq \frac{n_1}{4}.$$
\end{lemma}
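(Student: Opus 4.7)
For each pair $\{u,v\}\subseteq V_1$, define its \emph{type} $T(\{u,v\}):=\{s\in S:\{u,v\}\in L(s)\}\subseteq S$; its multiplicity in $G$ equals $|T(\{u,v\})|$. My plan is to show that, after a suitable labeling $S=\{a,b\}\cup\{c,d\}$, all but at most $\delta_9 n_1^2$ pairs of $V_1$ have type $\{a,b\}$, $\{c,d\}$, or $\{a,b,c,d\}$. The two ``multiplicity-$2$'' classes will play the role of the cliques inside $A$ and $B$, while the ``multiplicity-$4$'' class will form the bipartite crossing structure between $A$ and $B$.

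\textbf{Step 1 (multiplicities $2$ and $4$ dominate).} By Lemma~\ref{lemma:lemma_2.4} every four vertices of $V_1$ span at most $20$ edges of $G$, so Lemma~\ref{lemma:lemma_2.3} gives $e(G)\le 3\binom{n_1}{2}+n_1-2$. Together with the hypothesis $e(G)\ge (1-\delta_1)\,3n_1^2/2$, this puts $G$ within $O(\delta_1 n_1^2)$ of the F\"uredi--K\"undgen maximum, and averaging over $4$-subsets (each of which has deficit at least $4$ from the trivial bound~$24$) shows that most $4$-subsets have the balanced profile $(2,2,4,4,4,4)$. Lemma~\ref{lemma:lemma_2.5}(ii) says that the subgraph $G_{\ge 3}$ of pairs of multiplicity at least $3$ is $K_5$-free, hence has at most $3n_1^2/8$ edges by Tur\'an, and Lemma~\ref{lemma:lemma_2.5}(i) says that the subgraph $G_4$ of multiplicity-$4$ pairs is $K_4$-free. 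Letting $e_k$ denote the number of pairs of multiplicity $k$, these bounds combined with the edge-count estimate yield $e_0+e_1+e_3\le O(\delta_1^{1/2}n_1^2)$, $e_4\ge (1-O(\delta_1^{1/2}))\,n_1^2/4$, and $e_2\ge (1-O(\delta_1^{1/2}))\,n_1^2/4$.

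\textbf{Step 2 (bipartition of $G_4$ and pair-partition of $S$).} Because $G_4$ is $K_4$-free with nearly $n_1^2/4$ edges, and parts~(iii)--(v) of Lemma~\ref{lemma:lemma_2.5} forbid further triangle-based extensions in $G_{\ge 3}$, $G_4$ must be close to a balanced complete bipartite graph. Starting from a maximum cut of $G_4$ and iteratively moving every vertex that has more $G_4$-neighbours on its own side than across, I obtain a partition $V_1=A\cup B$ in which at most $O(\delta_1^{1/2}n_1^2)$ pairs are misplaced and $|A|,|B|\ge n_1/4$. To determine the labeling of $S$, I use that in the ideal structure a vertex $v\in A$ satisfies $\deg_{L(a)}(v)=\deg_{L(b)}(v)\approx n_1-1$ and $\deg_{L(c)}(v)=\deg_{L(d)}(v)\approx |B|$, and symmetrically for $v\in B$. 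Trying each of the three unordered $2{+}2$ splits of $S$ and measuring how well it matches the degree profile on $A\cup B$, I pick the split giving the best agreement and relabel so that it is $\{a,b\}\cup\{c,d\}$ with $A\leftrightarrow\{a,b\}$ and $B\leftrightarrow\{c,d\}$.

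\textbf{Step 3 (collecting exceptions).} Pairs violating (i), (ii), or (iii) belong to at most four classes: pairs of multiplicity $0$, $1$, or $3$ (bounded in Step~1); multiplicity-$4$ pairs inside $A$ or inside $B$ (the misplaced edges from Step~2); multiplicity-$2$ pairs inside $A$ or $B$ of the wrong type; and crossing pairs of multiplicity $\neq 4$. Each class is bounded by $O(\delta_1^{1/2}n_1^2)$ or $O(\delta_1^{1/4}n_1^2)$, and summing them gives the desired $\delta_9 n_1^2=417\,\delta^{1/8}n_1^2$. The exponent $1/8$ reflects three nested square roots of $\delta$: the initial cleaning that produced $\delta_1\sim\delta^{1/2}$, the $4$-subset averaging of Step~1, and the extraction of the bipartition of $G_4$ in Step~2. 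I expect the main obstacle to be precisely this bookkeeping of nested square roots and the constants lost at each application of Lemma~\ref{lemma:lemma_2.5}; matching the explicit constant $417$ in $\delta_9$ will require choosing the auxiliary parameters $\delta_3,\dots,\delta_8$ so that the error contributions of each class balance out.
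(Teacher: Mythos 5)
Your proposed route — a global analysis of the multiplicity distribution in $G$, followed by a max-cut of the multiplicity-$4$ subgraph $G_4$ — is genuinely different from the paper's. The paper never tries to control $e_2$ and $e_4$ globally. Instead it splits into cases according to whether some triple of vertices of $G[V_2]$ spans more than ten edges (Case~2, which is shown to be impossible) or not (Case~1). In Case~1 it anchors on a single multiplicity-$4$ edge $\{p,q\}$ and classifies every other vertex $x$ by the ordered pair of multiplicities $(\mathrm{mult}(\{p,x\}),\mathrm{mult}(\{q,x\}))$, which the triple constraint forces to be essentially $(4,2)$, $(2,4)$ or $(3,3)$; the sets $A$, $B$ are then the first two classes, and the correct labeling of $S$ is propagated from one anchor pair $\{x,y\}\subseteq A$ using Lemma~\ref{lemma:observation_2.1}(i). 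This local strategy avoids ever needing to know the global profile of multiplicities.

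Unfortunately your Step~1 contains a real gap. The conclusions $e_4,e_2\ge(1-O(\delta_1^{1/2}))n_1^2/4$ do not follow from the edge count together with $K_4$-freeness of $G_4$ and $K_5$-freeness of $G_{\ge 3}$. Working through the arithmetic: with $N=\binom{n_1}{2}$, the edge count $e(G)\ge(1-\delta_1)3N$ and the Tur\'an bound $e_3+e_4\le 3N/4$ only give $e_4\ge N/4 - O(\delta_1 N)\approx n_1^2/8$, which is half of what you assert; nothing forces $e_2$ up to $n_1^2/4$ either. Your appeal to ``averaging over $4$-subsets, each of deficit at least $4$'' also does not work: in the target configuration (inside pairs of multiplicity $2$, crossing pairs of multiplicity $4$) a random $4$-subset spans only about $18$ edges on average, so the Füredi–Kündgen bound of $20$ per $4$-set is slack on most $4$-subsets and averaging does not pin down the profile. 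Finally, even granting $e(G_4)\approx n_1^2/4$, ``$K_4$-free with $n_1^2/4$ edges'' is far from forcing near-bipartiteness (complete tripartite graphs are $K_4$-free with even more edges); the bipartite structure of $G_4$ is only forced by the Fano-specific multiplicity lemmas applied around an anchor multiplicity-$4$ edge, which is precisely what Case~1 of the paper's proof does and your Step~2 only gestures at. In short, Steps~1 and~2 as written are assertions rather than arguments, and the max-cut cleanup and degree-profile labeling of $S$ would also need substantial justification before the constant $417$ could be verified.
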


Before proving Lemma~\ref{lemma:claim}, we show that it implies the desired result Theorem~\ref{main}. 

Let $V_1=A' \cup B'$ denote the partition given by Lemma~\ref{lemma:claim}. Define a partition $A \cup B$ of $V$ by adding each vertex of $V \setminus V_1$ to $A'$ or $B'$ arbitrarily. We choose $\delta_{11}$ satisfying $\delta_{11}= (27/64) \left(2 \delta_9/c'\right)^{1/8},$ where $c'$ is given in Lemma~\ref{lemma:lemma_2.6}.

To obtain the required bound $e(A) + e(B) < 2 \delta_{11} n_{1}^3$, we suppose, for a contradiction, that one of the classes, say $A$, contains at least $\delta_{11} n_{1}^3$ hyperedges of $H$. By the definition of $V_1$, at least
 $$
\delta_{11} n_{1}^3-3 \delta_2 n^3/8 \geq \frac{\delta_{11}n_1^{3} }{2^{\frac{1}{8}}} = \frac{\delta_{11}n_1^{3} }{2^{\frac{1}{8}}|A'|^{3}} \cdot |A'|^3 \stackrel{|A'| \leq (3/4)n_1}{\geq} \frac{64\delta_{11}}{27 \cdot 2^{\frac{1}{8}}} \cdot |A'|^3 
$$
of these hyperedges lie in $A'$. This expression cannot be greater than $\binom{|A'|}{3}$, so that Lemma~\ref{lemma:lemma_2.6} applies and by (\ref{eq:lower9}), $H[A']$ contains at least
$$
\frac{c'\left(\frac{27}{64}\right)^8 \cdot \frac{2\delta_9}{c'} \cdot n_1^{24}}{2|A'|^{24}} \cdot |A'|^6 \stackrel{|A'| \leq (3/4)n_1}{\geq}  \frac{\delta_9 n_1^2 |A'|^4}{16} 
$$ 
copies of $K^{(3)}(2,2,2)$. If we find three vertex-disjoint edges $e_1,e_2,e_3$ in $L(a)$ such that $e_1,e_2,e_3$ form three classes of the partition of some copy of $K^{(3)}(2,2,2)$, then $H$ contains a Fano plane by Lemma \ref{lemma:observation_2.1}~(ii), a contradiction. On the other hand, by Lemma~\ref{lemma:claim}~(i) only at most $\delta_9 n_1^2$ pairs of vertices in $A'$ are not edges of $L(a)$. Clearly, these pairs can be contained in the vertex sets of at most $\delta_9 n_1^2 \binom{|A'|}{4} \leq \delta_9 n_1^2 |A'|^4 /24$ copies of $K^{(3)}(2,2,2)$, thus there is a copy of $K^{(3)}(2,2,2)$ in $H(A')$ such that all three pairs in the partition of its vertex set belong to $L(a)$, yielding a copy of a Fano plane by Lemma~\ref{lemma:observation_2.1}~(ii), which is the desired contradiction.
\end{proof}

\begin{proof}[Proof of Lemma~\ref{lemma:claim}]

We shall construct a sequence $V_1 \supseteq V_{2} \supseteq \cdots \supseteq V_5$ of subsets of the vertex set $V_1$ of $H[V_1]$, where $|V_j| = n_j$, $j = 1,  \ldots, 5$, and $V_1$ is the set containing  $n_1 \geq n-\delta_2 n$ vertices with degree at least $(1 - \delta_1) 3n^2/8$ that has been defined in the first part of the proof.  

For $S=\{a,b,c,d\}$, consider the link multigraph $G(S)$ on $n_1 \geq (1 - \delta_2)n$ vertices with at least $(1-\delta_1) 3n_1^2/2$ edges. If $G$ contains a vertex of degree less than $(1- \delta_3)3n_1$, then we delete this vertex. As before, we continue this deletion process until no vertex satisfies this property. If we had deleted $\delta_4 n_1 $ vertices, then we would have arrived at a multigraph with at least
\begin{eqnarray} \label{eq:lh4}
\left( 1 - \delta_1 - 2 \delta_4 (1 - \delta_3) \right) 3n_1^2/2
\end{eqnarray}
edges. We claim that for $\delta_3 = \delta_4 = \delta_1^{1/2}$ this is at least
$$3 [(1-\delta_4)n_1]^2/2.$$
Indeed, the inequality
\begin{eqnarray*} \label{eq:lh4a}
1 - \delta_1 - 2\delta_4(1 - \delta_3) \geq (1-\delta_4)^2
\end{eqnarray*}
is equivalent to 
\begin{eqnarray} \label{eq:lh5}
-\delta_1 + 2\delta_3 \delta_4  - \delta^2_4   \geq 0,
\end{eqnarray}
which holds by our choice of $\delta_3 = \delta_4 = \delta_1^{1/2}$. By Lemma~\ref{lemma:lemma_2.3}  we find four vertices spanning at least $21$ edges, hence by Lemma~\ref{lemma:lemma_2.4}  we have a Fano plane,  a contradiction. 

Thus, we deleted at most $\delta_4 n_1= \sqrt{(5/3)}\delta^{1/4}n_1$ vertices. This produces a subset $V_2 \subseteq V_1$ with $n_2=|V_2| \geq  n_1- \sqrt{(5/3)}\delta^{1/4}n_1$. We deleted at most $\delta_4 n_1 (1 - \delta_3)3n_1 \leq 3\delta_1^{1/2} n_1^2$ edges from $G$. By construction, all degrees in the subgraph $G[V_2]$ are at least $(1 - \delta_3)3 n_1 \geq (1 - \delta_3)3 n_2$. We shall refer to this inequality as the \emph{degree condition}.


Next, we distinguish two cases according to whether the subgraph $G[V_2]$ contains three vertices spanning at most ten edges or not. As it turns out, the second case will lead to a contradiction.

\bigskip

\noindent \emph{Case~1:}  Suppose that every three vertices in $G[V_2]$ span at most ten edges. In $G[V_2]$ there must be some edge of multiplicity $4$; otherwise, denoting the number of edges of multiplicity $3$ in $G[V_2]$ by $e_3$,  the degree condition leads to
$$
2\left(\binom{n_2}{2} - e_3\right) + 3e_3 \geq  (1- \delta_3) 3n_2^2/2,
$$
which implies  $e_3 \geq (1 - 3\delta_3)n_2^2/2$. Then, by Tur\'an's theorem for $\delta_3 < 1/12$, the subgraph $G[V_2]$
contains a complete graph $K_5$ whose edges have multiplicity at least 3. By Lemma~\ref{lemma:lemma_2.5}~(ii) the hypergraph $H$ must contain a Fano plane, a contradiction. 

Let $\{p,q\}$ be an edge of multiplicity $4$. As we are in case~1, for each vertex $r$ in $G[V_2]$ there are at most six edges between $r$ and $\{p,q\}$. Furthermore, by the degree condition, there are at least $(1-\delta_3)6n_2 - 8$  edges between $\{p,q\}$ and $V_2 - \{p,q\}$ in $G[V_2]$.  

We partition  $V_2 - \{p,q\}$ into four sets $A,B,C,D$ such that for each vertex $x$ in $A$ the edge $\{x,p\}$ has multiplicity $4$ and edge $\{x,q\}$ has multiplicity $2$, for each vertex $x$ in $B$ the edge $\{x,p\}$ has multiplicity $2$ and edge $\{x,q\}$ has multiplicity $4$ and for each $x$ in $C$ both edges $\{x,p\}$ and $\{x,q\}$ have multiplicity $3$. For each vertex  $x \in D$ the sum of the multiplicities of the edges $\{p, x\}$ and $\{q,x\}$ is at most $5$.

In the following we will show that the sizes of both sets $C$ and $D$ are small.

 Let $a_i$ be the number of vertices in $V_2 - \{p,q\}$ that are connected to $\{p,q\}$ via $i$ edges. Note that, because we are in case 1, $i \in\{ 0,\ldots,6\}$.  We have $\sum_{i=0}^{6} a_i = n_2-2$, and
\begin{eqnarray*} \label{delta5_1}
5(n_2 - 2 - a_6) + 6a_6 \geq \sum_{i=0}^6 i \cdot a_i \geq (1-\delta_3) 6n_2 - 8, 
\end{eqnarray*}
which is equivalent to
$$a_6 \geq (1 - 6\delta_3)n_2 + 2.$$
Hence, $\sum_{i=0}^5 a_i \leq 6 \delta_3 n_2 = 6  \sqrt{(5/3)}\delta^{1/4} n_2$, and we set $\delta_5 = 6 \delta_3 = 6 \delta_1^{1/2} = {6\sqrt{(5/3)}}\delta^{1/4}$. 
This implies that the set $D$ in the partition contains at most $\delta_5 n_2$ vertices. Let $V_3=V_2-D$. Note that $n_3=|V_3| \geq (1-\delta_5)n_2$ and that $e(G[V_3])\geq  e(G[V_2])-4\delta_5n_2^2$. 

Now we consider the set $C$. If the subgraph $G[C]$ contains an edge $\{x,y\}$ of multiplicity at least $3$, then $p,q,x,y$ form a $K_4$ in $G[V_3]$ satisfying the hypothesis of Lemma~\ref{lemma:lemma_2.5}~(i), so $H$ must contain a Fano plane, a contradiction. Thus subgraph $G[C]$ contains only edges of multiplicity at most $2$. Moreover, if there is an edge $\{ x, y\}$ with multiplicity at least $2$ between $A \cup B$ and $C$, then $p,q,x,y$ form a $K_4$ as in Lemma~\ref{lemma:lemma_2.5}~(iv), and $H$ must contain a Fano plane, a contradiction. Therefore, all edges between $C$ and $A \cup  B$   have multiplicity at most $1$.
Since we are in case~1, every edge inside $A$ or $B$ has multiplicity at most $2$. Thus, the maximum possible number of edges in $A \cup B$ is achieved when every edge inside $A$ or $B$ has multiplicity $2$ and every edge with one vertex in $A$ and one vertex in $B$ has multiplicity $4$. Therefore, 
\begin{eqnarray} \label{eq:pp1}
&&\frac{3}{2}n_1^2 - 29.5\delta_1^{1/2} n_2^2 \nonumber\\
&\leq& (1 - \delta_1)\frac{3}{2}n_1^2- 3\delta_4 n_1^2 - 4\delta_5 n_2^2 \nonumber\\
&\leq& e(G[V_3]) \nonumber\\
&\leq& 2 \binom{|A|}{2} + 2 \binom{|B|}{2} + 4|A||B| + 2 \binom{|C|}{2} + |C| |A \cup B|.
\end{eqnarray}
For fixed size $|A| + |B|$, by taking the derivative,  (\ref{eq:pp1}) is maximum for $|A|=|B|=m$. Then, with $|C| = n_3 - 2m$, the term~(\ref{eq:pp1}) is at most 
$$
6m^2 -2m +2m|C| + |C|^2 - |C|= 6m^2 -2mn_3 + n_3^2 - n_3 \leq 6m^2 -2mn_1 + n_1^2,
$$
and therefore with~(\ref{eq:pp1})
$$
\frac{3n_1^2}{2} - 29.5 \delta_1^{1/2} n_1^2 \leq 6m^2 -2mn_1 + n_1^2,
$$
thus, for $n$ sufficiently large, 
$$
m \geq \frac{n_1}{6} + \frac{n_1}{3} \cdot \sqrt{1 - 45 \delta_1^{1/2}} + 1.
$$
Using $\sqrt{1-x} \geq 1-3x/5$ for $0 \leq x \leq 5/9$, with $\delta_1^{1/2} \leq 1/81$, we infer 
$$
m \geq \frac{n_1}{2} (1 - 18 \delta_1^{1/2}) +1,
$$
thus $|C| \leq 18\delta_1^{1/2}n_1$.  Let $\delta_6 = 18\delta_1^{1/2}$. We produce $V_4$ by deleting all vertices in $C$, which leads to the deletion of at most $4 \delta_6 n_1^2$ edges incident with them.  For the sake of simplicity, we also delete $p$ and $q$ from $G[V_3]$. Therefore $G[V_4]$ has $n_4 \geq n_3- 18\delta_1^{1/2}n_1$ vertices.

Now we show that $A$ and $B$ satisfy the conditions of the lemma (we note that, near the end of the proof, each vertex in $C \cup D \cup (V\setminus V_1)$ will be added arbitrarily to $A$ or $B$).  Note that to obtain $G[V_4]$, at most 
\begin{eqnarray} \label{n1n4edges}
(3\delta_1^{1/2} + 4 \delta_5 + 4 \delta_6)n_1^2 \leq 128\delta^{1/4} n_1^2
\end{eqnarray} 
edges have been removed from $G = G[V_1]$.

Let $E'$ be the set of all pairs of vertices within $A$ or $B$ of multiplicity at most $1$ and pairs of vertices between $A$ and $B$ of multiplicity at most $3$.
Since $|A| + |B| = n_4$, we deduce
\begin{eqnarray*}
(1-\delta_1)3 \frac{n_1^2}{2} &\leq& e(G[V_1]) \leq 2 \binom{|A|}{2} + 2 \binom{|B|}{2} + 4|A||B| + 128 \delta^{1/4}n_1^2- |E'| \\
&\leq& 3 \binom{|A| + |B|}{2} - \frac{(|A| - |B|)^2}{2} + 128 \delta^{1/4}n_1^2 - |E'| \\
&\leq& 3 \binom{n_4}{2} - \frac{(|A| - |B|)^2}{2} + 128\delta^{1/4}n_1^2 - |E'| \\
&\leq& 3 \binom{n_1}{2} - \frac{(|A| - |B|)^2}{2} + 128\delta^{1/4}n_1^2 - |E'|,
\end{eqnarray*}
or, for $\delta < 4^4/5^4$,  
\begin{eqnarray*}
\frac{(|A| - |B|)^2}{2} + |E'| &\leq& \left(\frac{3}{2} \delta_1 + 128 \delta^{1/4}\right) n_1^2 \leq 130\delta^{1/4} n_1^2.
\end{eqnarray*}
Therefore,  we infer $|E'| \leq 130 \delta^{1/4} n_1^2$ and with  $\delta_7 = \sqrt{260} \delta^{1/8}$  also
$$\big||A| - |B|\big| \leq \delta_7 n_1,$$ which, combined with $|A| + |B| = n_4$, implies 
\begin{equation}\label{sizeA}
\frac{n_4}{2} - \frac{\delta_7 n_1}{2} \leq |A|, |B| \leq  \frac{n_4}{2} + \frac{\delta_7 n_1}{2}.
\end{equation}
To find a lower bound on $|A|$ in terms of $n_1$, we first note that
\begin{eqnarray*}
n_4 &\geq& n_3 - 18 \delta_1^{1/2} n_1 \\
&\geq& (1-\delta_5) n_2 - 18 \delta_1^{1/2} n_1 \\
&\geq& (1-\delta_5) [n_1 - \delta_1^{1/2}n_1] - 18 \delta_1^{1/2} n_1 \\
&\geq& n_1 - 25\delta_1^{1/2}n_1. 
\end{eqnarray*}
With~\eqref{sizeA}, this leads to
\begin{eqnarray} \label{eq:lower7}
\nonumber |A| &\geq& \frac{n_4}{2} - \frac{\sqrt{65}}{2} \delta^{1/8} n_1\nonumber \\
&\geq& \frac{n_1}{2} -\frac{25}{2}\delta_1^{1/2}n_1 - \sqrt{260} \delta^{1/8} n_1/2  \nonumber\\
&=& \frac{n_1}{2} - \left(\frac{25}{2} \sqrt{\frac{5}{3}} \delta^{1/8} + \sqrt{65} \right) \delta^{1/8} n_1 \nonumber \\
&\geq& \frac{n_1}{2} - 9 \delta^{1/8}n_1. 
\end{eqnarray}
We used that $\delta<(1/36)^{8}$, which also leads to
\begin{eqnarray} \label{eq:lower8}
 |A| \geq  \frac{n_1}{2} - 9 \delta^{1/8}n_1 \geq  \frac{n_1}{4}.
 \end{eqnarray}
The same applies to $|B|$. Then, since $|A| + |B| \leq n_1$, inequality~(\ref{eq:lower8}) implies
\begin{eqnarray} \label{eq:lower9}
 \frac{n_1}{4} \leq |A|, |B| \leq \frac{3n_1}{4} .
\end{eqnarray}

By \eqref{n1n4edges}, 
for all but at most
\begin{eqnarray} \label{notyetdelta9}
|E'| + 128 \delta^{1/4} n_1^2  \leq 258 \delta^{1/4} n_1^2
\end{eqnarray}
pairs $\{x,y\}$ of vertices $x,y \in V_1$, the multiplicity of $\{x,y\}$ in $G[V_4]$ is equal to $2$, if $\{x,y\} \subset A$ or $\{x,y\} \subset B$, or equal to $4$, if $x \in A$ and $y \in B$. It remains to show that  most pairs $\{x,y\} \subset A$ are edges in the same two link graphs and most pairs $\{x,y\} \subset B$ are edges in  the two other link graphs. 

Let $d(x)$ be the degree of vertex $x$ in $G[V_4]$. 
For a vertex $x \in A$, let $B(x) \subseteq B$ be the set of vertices in $B$ joined to $x$ by an edge of multiplicity $4$. For a vertex $u \in B$, let $A(u) \subseteq A$ be defined correspondingly. Since $|A|$ and $|B|$ are each at most $n_4/2 + \delta_7n_1$ and at least $n_4/2 - \delta_7n_1$, and their sum is $n_4$, we have
\begin{eqnarray*}
(1-\delta_3)3n_2 - 4(\delta_5 + \delta_6)n_1 &<& d(x) \hspace{1cm}  \\
&<& 2 |A| + 4|B| - (|B| - |B(x)|) \\
&\leq & 2(n_4/2 - \delta_7n_1/2) + 4(n_4/2 + \delta_7n_1/2) - (|B| - |B(x)|)\\
&=& 3 n_4 + \delta_7 n_1 - (|B| - |B(x)|),
\end{eqnarray*}
so
\begin{eqnarray*}|B| - |B(x)| &\leq& (3\delta_3 + 4\delta_5 + 4\delta_6 + \delta_7)n_1\\
&\leq& (3 \delta_1^{1/2} + 24 \delta_1^{1/2} + 72 \delta_1^{1/4} +  17 \delta_1^{1/4})n_1 \\
&\leq& (27 \delta_1^{1/4} + 89) \delta_1^{1/4}n_1 \leq 90  \delta_1^{1/4}n_1,
\end{eqnarray*}
 and hence $|B| - |B(x)| < \delta_8 n_1$, where $\delta_8 = 90\delta_1^{1/4}$.  This means that every vertex $x \in V_4$ is incident with at most $\delta_8 n_1$ vertices of the other side with edges that do not have multiplicity 4.

Now fix an edge $\{x,y\}$ of multiplicity $2$ inside $A$. Without loss of generality assume that it lies in the edge set of both $L_A(a)$ and $L_A(b)$, where set subscripts indicate subgraphs of the link graph induced by the corresponding set. At this point, we would know that most almost all pairs of vertices in $A$ are edges of multiplicity $2$ and that the same holds for $B$. Since $|B| - |B(x) \cap B(y)| < 2 \delta_8 n_1$, we can delete all the vertices in $B - (B(x) \cap B(y))$ and assume that $B = B(x) \cap B(y)$.  Now no edge $\{w,z\}$ of $B$ can be in $L_B(a)$, as then partitioning the edges of a copy of the complete graph $K_4$ on the vertex set  $\{w,x,y,z\}$ as $M_a = \{\{x,y\},\{w,z\}\}$, $M_b = \{\{w,x\},\{y,z\}\}$, $M_c = \{\{w,y\},\{x,z\}\}$ gives a Fano plane  by Lemma~\ref{lemma:observation_2.1}~(i). Similarly no edge in $B$ can be in $L_B(b)$. 

Now let $\{u,v\}$ be an edge of multiplicity $2$ in $B$. Then, as we just proved, it lies in the edge set of both $L_B(c)$ and $L_B(d)$. So arguing as above, and deleting the at most $2 \delta_8 n_1$ vertices in $A - (A(u) \cap A(v))$, we can assume that all vertices in $A$ are adjacent to both $u$ and $v$ by edges of multiplicity $4$, and therefore no edge in $A$ can be in $L(c)$ or $L(d)$. More formally, the above vertex-deletions produce a graph $G_5$ on $n_5 \geq n_4 -  4 \delta_8 n_1$ vertices. Now we distribute all the vertices of $V_1$ that have been deleted up to this point to make $A \cup B$ a partition of $V_1$. At this point we find the upper bound $\delta_9 n_1^2$ on the number of pairs of vertices in $V_1$ that do not satisfy the conditions of Lemma~\ref{lemma:claim} by summing the upper bounds \eqref{notyetdelta9} and $4\delta_8 n_1^2$  on  the number of pairs deleted from $G$, that is,
\begin{eqnarray*}
258 \delta^{1/4} n_1^2 + 4 \delta_8 n_1^2 
&\leq& 258 \delta^{1/4} n_1^2 + 4 \cdot 90\delta_1^{1/4} n_1^2 \\
&\leq& \left(258\delta^{1/8}  + 360 (5/3)^{1/4} \right) \delta^{1/8} n_1^2 \\
&\leq& 417\delta^{1/8} n_1^2,
\end{eqnarray*} 
and we set $\delta_9 = 417\delta^{1/8}$. 
This shows that $A \cup B$ is the required partition, and completes the analysis of Case~1.
\vspace{0.4cm}

\noindent \emph{Case~2:} Suppose next that there exist three vertices $p,q,r$ in $G[V_2]$ that span at least $11$ edges. Without loss of generality we can assume that $\{p,q\}$ and $\{p,r\}$ have multiplicity $4$ and $\{q,r\}$ has multiplicity at least $3$. By the degree condition, there are at least $(1 - \delta_3)9n_2 - 24$  edges between $\{p,q,r\}$ and $V_2 - \{p,q,r\}$.

 By Lemma~\ref{lemma:lemma_2.4}  for each vertex $s$ the sum of the multiplicities of all edges from $s \in V_2 - \{p,q,r\}$ to $\{p,q,r\}$ is at most $9$.
Let $a_i$ be the number of vertices in $V_2 - \{p,q,r\}$ that are connected to $\{p,q,r\}$ via $i$ edges, $i = 0,\ldots,9$.  We have $\sum_{i=0}^9 a_i = n_2-3$, and
\begin{eqnarray*} \label{delta7_1}
8(n_2 - 3 - a_9) + 9 a_9 \geq \sum_{i=0}^9 i \cdot a_i \geq (1-\delta_3) 9n_2 - 24,
\end{eqnarray*}
which implies
$a_9 \geq (1 - 9\delta_3)n_2$, and
 $\sum_{i=0}^8a_i \leq 9 \delta_3 n_2 = 9 \delta_1^{1/2} n_2$. We set $\delta_{10} = 9 \delta_1^{1/2}$ and we produce $V_3$ by deleting the at most $\delta_{10} n_2$ vertices from $V_2 - \{p,q,r\}$ that are each connected to $\{p,q,r\}$ by less than nine edges. These vertices  altogether are incident with at most $4 \delta_{10} n_2^2$ edges. Therefore, $G[V_3]$ has $n_3 \geq n_2 - \delta_{10} n_2$ vertices.

By Lemma~\ref{lemma:lemma_2.5}~(i) and (iii) no vertex $s \in V_3 - \{p,q,r\}$ can be adjacent to vertices in $\{p,q,r\}$ by edges with multiplicities $3,3,3$ or $4,3,2$.  Since $V_3$ only contains vertices counted by $a_9$, the degree pattern in $\{p,q,r\}$ of all vertices  $s \in V_3 - \{p,q,r\}$ is $4,4,1$, in particular, the multiplicities of the edges $\{p, s \}$, $\{q,  s\}$ and $\{r, s\}$ are $1,4,4$ in this order, as otherwise,  by Lemma~\ref{lemma:lemma_2.5}~(v), $H$ would contain a copy of the Fano plane.
Suppose that there is some edge $\{s,t\}$ in $G[V_3-\{p,q,r\}]$ of multiplicity at least $2$. Then the vertices $q,r,s,t$ span at least $21$ edges,  and by Lemma~\ref{lemma:lemma_2.4}, $H$ contains a copy of the Fano plane. Thus, all edges in  $G[V_3-\{p,q,r\}]$ have multiplicity at most $1$. On the other hand, by the degree condition, the subgraph $G[V_3-\{p,q,r\}]$ contains at least 
\begin{eqnarray*}
(1-\delta_3)3n_2^2/2 - 4\delta_{10}n_2^2 - 12  n_2 &\geq& 3n_2^2/2 - 38 \delta_1^{1/2} n_2^2 \\ 
&\geq& (3/2 - 38\delta_1^{1/2}) n_3^2
\end{eqnarray*}
edges. For   $\delta_1 < (1/38)^2$, this is not possible. 
Therefore, case~2 always leads to a contradiction. This completes the proof of Lemma~\ref{lemma:claim}.
\end{proof}


\begin{thebibliography} {PPP}



\bibitem{BRstability} L. Bellmann and C. Reiher, Tur\'an's theorem for the Fano plane, \emph{Combinatorica} {\bf 39} (2019), 961--982.




\bibitem{decaen_fueredi00} D.~de~Caen and Z.~F\"uredi, The maximum size of $3$-uniform hypergraphs not containing a Fano plane, \emph{Journal of Combinatorial Theory Series B} {\bf 78} (2000), 274--276.

\bibitem{chung_lu99} F.~Chung and L.~Lu, An upper bound for the Tur\'an number $t_3(n,4)$, \emph{Journal of Combinatorial Theory, Series A} {\bf 87} (1999), 
381--389.

\bibitem{ERFano} L.O. Contiero, C.~Hoppen, H.~Lefmann and K.~Odermann, The rainbow Erd\H{o}s-Rothschild problem for the Fano plane, submitted for publication.


     

\bibitem{FV2013} V. Falgas-Ravry and E.R. Vaughan, Applications of the semi-definite method to the Tur\'{a}n Density Problem for 3-graphs, \emph{Combin. Prob. Comput.} {\bf 22(1)} (2013), 21--54.




\bibitem{fueredi_kuendgen02} Z.~F\"uredi and A.~K\"undgen, Tur\'an problems for weighted graphs, \emph{Journal of Graph Theory} {\bf 40} (2002), 195-225.

\bibitem{FSstability} Z. F\"{u}redi and M. Simonovits, {Triple systems not containing a Fano configuration,} \emph{Combin. Prob. Comput.} {\bf 14} (2005), 467--484.








\bibitem{Keevash2011} P.~Keevash, Hypergraph Tur\'an Problems, in: \emph{Surveys in Combinatorics 2011}, Proc.\ of the 23rd British Combinatorial Conference, ed.\ R.~Chapman, Cambridge Univ.\ Press, \emph{London Math.\ Soc.\ Lecture Note Series} {\bf 392}, 2011, 83--140.

\bibitem{keevash2005}P.~Keevash and B.~Sudakov, The Tur\'an number of the Fano plane, \emph{Combinatorica} {\bf 25} (2005), 561--574.










\bibitem{MV16} D.~Mubayi and J. Verstra\"{e}te, \emph{A survey of Tur\'{a}n problems for expansions}, Recent Trends in Combinatorics, Volume 159 of the series \emph{The IMA Volumes in Mathematics and its Applications}, 2016, pp 117--143.




\bibitem{razborov2010} A.A. Razborov, \emph{On 3-hypergraphs with forbidden 4-vertex configurations}, SIAM Journal on Discrete Mathematics {\bf 24(3)} (2010), 946--963.  


\bibitem{simonovits} M.~Simonovits, \emph{A Method for Solving Extremal 
Problems in Graph Theory, Stability Problems}, Theory of Graphs (Proc. Colloq., Tihany, 1966), Academic Press, New York, 1968, 279--319.

\bibitem{sos76} V.T. S\'{o}s, \emph{Remarks on the connection of graph theory, finite geometry and block designs},
Teorie Combinatorie, Tomo II, Accad. Naz. Lincei, Rome, 1976, 223--233.

 

\end{thebibliography}
\end{document}